\newtheorem{definition}{Definition}[section] 
\newtheorem{theorem}[definition]{Theorem}
\newtheorem{lemma}[definition]{Lemma}
\newtheorem{proposition}[definition]{Proposition}
\newtheorem{remark}[definition]{Remark}
\title[On some supersymmetric spectral data : A multiplicativity property]{On the supersymmetric $N=1$ and $N=(1,1)$ spectral data :\\
A multiplicativity property}
\author{Satyajit Guin}
\address{Indian Institute of Science Education and Research, Mohali, Punjab 140306}
\email{satyamath@gmail.com , satyajit@iisermohali.ac.in}
\thanks{$\dagger$ Author is supported by INSPIRE Faculty Award (Award No. DST/INSPIRE/04/2015/000901)}
\keywords{$N=1$ spectral data, $N=(1,1)$ spectral data, spectral triple, multiplicativity}
\date{\today}
\subjclass[2010]{Primary 58B34; Secondary 46L87, 81T75}
\begin{document}
\rmfamily

\begin{abstract}
We show that there are six different choice of tensor product of supersymmetric $N=(1,1)$ spectral data in the context of supersymmetric quantum
theory and noncommutative geometry. We also show that the procedure of extending a supersymmetric $N=1$ spectral data to $N=(1,1)$ spectral data
respects only one tensor product among these. We refer this as the multiplicativity property of the extension procedure. Therefore, if we demand
that the extension procedure is multiplicative then there is a unique choice of tensor product of $N=(1,1)$ spectral data.
\end{abstract}

\maketitle

\section{Introduction}

In noncommutative geometry, a (noncommutative) manifold is described by a tuple called spectral triple. It turns out that the notion of spectral
triple is not quite appropriate to describe higher geometric structures, e.g. symplectic, complex, Hermitian, K\"ahler or hyper-K\"ahler, even in the
classical setting. Inspired by the work of Witten (\cite{Wit1}) and Jaffe et al. (\cite{Jaf}), a natural solution has been obtained by Fr\"{o}hlich
et al. (\cite{FGR1},\cite{FGR2}) in the context of supersymmetric quantum mechanics. Supersymmetric algebraic formulation of manifolds endowed with
these higher geometric structures is obtained in (\cite{FGR1}), which then readily generalizes to the noncommutative geometry framework in (\cite{FGR2}).
These various higher geometric structures are denoted by $N=1,\,N=2$ and $N=(n,n)$ with $n=1,2,4$, along the line of supersymmetry.
Note that the $N=1$ spectral data is specified by a $\varTheta$-summable even spectral triple in noncommutative geometry.

The $N=(1,1)$ spectral data is the first step of defining higher geometric structure on a $N=1$ spectral data (i,e. spectral triple). In the
classical case of spin manifold $\mathbb{M}$ from the $N=(1,1)$ spectral data one may recover the graded algebra of differential forms on
$\mathbb{M}$ and in particular the exterior differential. Hence, it is a natural and important question whether a $N=1$ spectral data extends to
$N=(1,1)$ spectral data over the same (noncommutative) base space. As shown in (\cite{FGR1}), this is always possible in the classical case of
manifolds. However, in the noncommutative situation one faces a difficulty regarding the extension. Guided by the classical case, a procedure to
extend a $N=1$ spectral data to a $N=(1,1)$ spectral data over the same base space has been suggested in (\cite{FGR2}) using suitable connection
on a dense finitely generated projective module equipped with a Hermitian structure. Apart from the classical case of manifolds, existence of such
connection was proved for the noncommutative $2$-torus and the fuzzy $3$-sphere in (\cite{FGR2}).

Now, like in the classical case where forming the product between two geometric spaces is a basic operation in geometry, considering product of
noncommutative spaces is also of much relevant importance not only for construction of a would-be tensor category but also bears interest for some
applications in theoretical physics (\cite{CC},\cite{CMA},\cite{Con1}). We study the behavior of the above discussed extension procedure under tensor
product of $N=1$ spectral data. We use the shorthand notation $\Phi:N=1\Longrightarrow N=(1,1)$ to mean this extension procedure. For two $N=1$
spectral data $(\mathcal{A}_j,\mathcal{H}_j,D_j,\gamma_j),\,j=1,2$, we show that $\Phi\left(\otimes_{j=1}^2(\mathcal{A}_j,\mathcal{H}_j,D_j,
\gamma_j)\right)$ also becomes a $N=(1,1)$ spectral data if $\Phi(\mathcal{A}_j,\mathcal{H}_j,D_j,\gamma_j)$ are individually so. We also show that
there are six different choice of tensor product of $N=(1,1)$ spectral data and $\Phi$ becomes multiplicative, i,e.
\begin{center}
$\Phi\left(\otimes_{j=1}^2(\mathcal{A}_j,\mathcal{H}_j,D_j,\gamma_j)\right)=\otimes_{j=1}^2\Phi\left((\mathcal{A}_j,\mathcal{H}_j,D_j,\gamma_j)\right)$
\end{center}
w.r.t only one choice of tensor product among these. That is, if we demand that the procedure of extention $\Phi$ is multiplicative then there is
a unique choice of tensor product of $N=(1,1)$ spectral data.
\medskip


\section {The supersymmetric \texorpdfstring{$N=1$}{Lg} and \texorpdfstring{$N=(1,1)$}{Lg} spectral data}

\begin{definition}\label{N=1 spectral data}
A quadruple $(\mathcal{A},\mathcal{H},D,\gamma)$ is called a set of $N=1$ spectral data if
\begin{enumerate}
\item $\mathcal{A}$ is a unital associative $*$-algebra represented faithfully on the separable Hilbert space $\mathcal{H}$ by bounded operators;
\item $D$ is a self-adjoint operator on $\mathcal{H}$ such that
\begin{enumerate}
\item for each $a\in\mathcal{A}$, the commutator $[D,a]$ extends uniquely to a bounded operator on $\mathcal{H}$,
\item the operator $exp(-\varepsilon D^2)$ is trace class for all $\varepsilon>0;$
\end{enumerate}
\item $\gamma$ is a $\mathbb{Z}_2$-grading on $\mathcal{H}$ such that $[\gamma,a]=0$ for all $a\in\mathcal{A}$ and $\{\gamma,D\}=0$.
\end{enumerate}
\end{definition}

\begin{remark}\rm
Observe that the $N=1$ spectral data represents a $\varTheta$-summable even spectral triple in noncommutative geometry.
\end{remark}

\begin{definition}\label{N=(1,1) spectral data}
A quintuple $(\mathcal{A},\mathcal{H},d,\gamma,\star)$ is called a set of $N=(1,1)$ spectral data if
\begin{enumerate}
\item $\mathcal{A}$ is a unital associative $*$-algebra represented faithfully on the separable Hilbert space $\mathcal{H}$ by bounded operators;
\item $d$ is a densely defined closed operator on $\mathcal{H}$ such that
\begin{enumerate}
\item $d^2=0$,
\item for each $a\in\mathcal{A}$, the commutator $[d,a]$ extends uniquely to a bounded operator on $\mathcal{H}$,
\item the operator $exp(-\varepsilon\bigtriangleup)$, with $\bigtriangleup=dd^*+d^*d$, is trace class for all $\varepsilon>0;$
\end{enumerate}
\item $\gamma$ is a $\mathbb{Z}_2$-grading on $\mathcal{H}$ such that $[\gamma,a]=0$ for all $a\in\mathcal{A}$ and $\{\gamma,d\}=0;$
\item $\star$ is a unitary operator acting on $\mathcal{H}$ such that $[\star,a]=0$ for all $a\in\mathcal{A}$ and $\,\star\,d=-d^*\star$.
\end{enumerate}
\end{definition}

\begin{remark}\rm
\begin{enumerate}
\item In analogy with the classical case, the operator $\star$ is called the Hodge operator.
\item The operator $\star$ can never be taken as the grading $\gamma$ itself. Otherwise, the condition $\{\gamma,d\}=0$ and $\star d=-d^*\star\,$
will force $d=d^*$, in which case $d=0$ given $d^2=0$.
\end{enumerate}
\end{remark}

As is always achievable in the classical case of manifolds, we can, and we will, w.l.o.g assume that the Hodge operator is a self-adjoint unitary
commuting with the grading operator (see discussion in page $(139)$ of \cite{FGR2}).

Definition (\ref{N=(1,1) spectral data}) of $N=(1,1)$ spectral data has an alternative description. One can introduce two unbounded operators
\begin{center}
$\mathfrak{D}=d+d^*\quad,\quad\overline{\mathfrak{D}}=i(d-d^*)$
\end{center}
(Caution: $\overline{\mathfrak{D}}$ is not the closure of $\mathfrak{D}$) which satisfy the following relations
\begin{center}
$\mathfrak{D}^2=\overline{\mathfrak{D}}^2\quad,\quad\{\mathfrak{D},\overline{\mathfrak{D}}\}=0$
\end{center}
making the notion of $N=(1,1)$ spectral data an immediate generalization of a classical $N=(1,1)$ Dirac bundle (\cite{FGR1},\cite{FGR2}). Conversely,
starting with $\mathfrak{D},\,\overline{\mathfrak{D}}$ satisfying the above relations, one can define
\begin{center}
$d=\frac{1}{2}(\mathfrak{D}-i\overline{\mathfrak{D}}\,)\quad,\quad d^*=\frac{1}{2}(\mathfrak{D}+i\overline{\mathfrak{D}}\,)\,.$
\end{center}
For all $\varepsilon>0$, the condition $exp(-\varepsilon(dd^*+d^*d))$ is a trace class operator becomes equivalent with $exp(-\varepsilon
\mathfrak{D}^2)$ is a trace class operator.

\begin{lemma}\label{crucial relation involving Hodge}
We have
\begin{enumerate}
\item $\{\gamma,d\}=0$ if and only if $\{\gamma,\mathfrak{D}\}=\{\gamma,\overline{\mathfrak{D}}\}=0\,;$
\item $\star d=-d^*\star\,$ if and only if $\{\star,\mathfrak{D}\}=[\star,\overline{\mathfrak{D}}\,]=0\,$.
\end{enumerate}
\end{lemma}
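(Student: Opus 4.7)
The plan is to observe that both assertions are essentially a linear-algebra translation between the two generating pairs $(d,d^*)$ and $(\mathfrak{D},\overline{\mathfrak{D}})$, which are related by the invertible transformation
\[
\mathfrak{D}=d+d^*,\quad \overline{\mathfrak{D}}=i(d-d^*),\qquad d=\tfrac{1}{2}(\mathfrak{D}-i\overline{\mathfrak{D}}),\quad d^*=\tfrac{1}{2}(\mathfrak{D}+i\overline{\mathfrak{D}}).
\]
So once I know how $\gamma$ or $\star$ interacts with $d$, I can feed in the first set of formulas to get its interaction with $\mathfrak{D},\overline{\mathfrak{D}}$, and go backwards using the second set.

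For part (1), the key auxiliary fact is that $\gamma$ is self-adjoint (it is a $\mathbb{Z}_2$-grading), so taking the Hilbert-space adjoint of $\gamma d+d\gamma=0$ yields $d^*\gamma+\gamma d^*=0$, i.e.\ $\{\gamma,d^*\}=0$. Then $\{\gamma,\mathfrak{D}\}=\{\gamma,d\}+\{\gamma,d^*\}=0$ and $\{\gamma,\overline{\mathfrak{D}}\}=i(\{\gamma,d\}-\{\gamma,d^*\})=0$. Conversely, if both $\{\gamma,\mathfrak{D}\}$ and $\{\gamma,\overline{\mathfrak{D}}\}$ vanish, linearity combined with $d=\tfrac{1}{2}(\mathfrak{D}-i\overline{\mathfrak{D}})$ gives $\{\gamma,d\}=\tfrac{1}{2}(\{\gamma,\mathfrak{D}\}-i\{\gamma,\overline{\mathfrak{D}}\})=0$.

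For part (2), I first derive the companion relation $\star d^*=-d\star$ from the hypothesis $\star d=-d^*\star$. Since the preceding paragraph allows us to take $\star$ self-adjoint and unitary (so $\star^2=1$), I multiply $\star d=-d^*\star$ by $\star$ on both sides to get $d\star=-\star d^*$, which is exactly $\star d^*=-d\star$. Using these two identities,
\[
\{\star,\mathfrak{D}\}=\star d+\star d^*+d\star+d^*\star=-d^*\star-d\star+d\star+d^*\star=0,
\]
and
\[
[\star,\overline{\mathfrak{D}}]=i\bigl(\star d-\star d^*-d\star+d^*\star\bigr)=i\bigl(-d^*\star+d\star-d\star+d^*\star\bigr)=0.
\]
Conversely, if $\{\star,\mathfrak{D}\}=0$ and $[\star,\overline{\mathfrak{D}}]=0$, then using $d=\tfrac{1}{2}(\mathfrak{D}-i\overline{\mathfrak{D}})$ and $d^*=\tfrac{1}{2}(\mathfrak{D}+i\overline{\mathfrak{D}})$ I compute $\star d=\tfrac{1}{2}(-\mathfrak{D}\star-i\overline{\mathfrak{D}}\star)=-d^*\star$, recovering the Hodge relation.

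The calculation is essentially bookkeeping, so no genuine obstacle is expected; the one step that deserves a little care is deriving $\star d^*=-d\star$ from $\star d=-d^*\star$, because it is the input that makes the cross terms cancel in both the anticommutator with $\mathfrak{D}$ and the commutator with $\overline{\mathfrak{D}}$, and it depends crucially on the standing assumption that $\star$ is a self-adjoint unitary. Aside from tracking signs and factors of $i$ in the $\overline{\mathfrak{D}}$ computations, there is no further subtlety.
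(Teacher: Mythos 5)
Your proof is correct, and it is precisely the routine check the paper omits: its own proof of this lemma reads only ``Easy verification.'' Your two auxiliary identities $\{\gamma,d^*\}=0$ and $\star d^*=-d\star$, obtained from the standing assumptions that $\gamma$ and $\star$ are self-adjoint unitaries, are exactly what makes the cross terms cancel, and the converse directions via $d=\tfrac{1}{2}(\mathfrak{D}-i\overline{\mathfrak{D}})$ are handled correctly.
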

\begin{proof}
Easy verification.
\end{proof}

Therefore, the data $(\mathcal{A},\mathcal{H},d,\gamma,\star)$ and $(\mathcal{A},\mathcal{H},\mathfrak{D},\overline{\mathfrak{D}},\gamma,\star)$ are
equivalent. In the classical situation of manifolds, any $N=1$ spectral data can always be extended to a $N=(1,1)$ spectral data over the same
base space (\cite{FGR1}). However, in the noncommutative framework this extension is not obvious. Guided by the classical case of manifolds, a
procedure of extension is suggested by Fr\"{o}hlich et al. in (\cite{FGR2}) which we discuss now.
\medskip

Let $\mathcal{E}$ be a finitely generated projective (f.g.p) left module over $\mathcal{A}$ and $\mathcal{E}^*:=\mathcal{H}om_\mathcal{A}
(\mathcal{E},\mathcal{A})$. Clearly, $\mathcal{E}^*$ is also a left $\mathcal{A}$-module by the rule $(a\,.\,\phi)(\xi):=\phi(\xi)a^*,\,\forall
\,\xi\in\mathcal{E}$. Recall the definition of Hermitian structure (Def. $[2.8]$ in \cite{FGR2}) on $\mathcal{E}$. Any free $\mathcal{A}$-module
$\mathcal{E}_0=\mathcal{A}^n$ has a canonical Hermitian structure on it, given by $\langle\,\xi,\eta\,\rangle_\mathcal{A}=\sum_{j=1}^n\xi_j\eta_j^*$
for all $\xi=(\xi_1,\ldots,\xi_q)\in\mathcal{E}_0\,,\,\eta=(\eta_1,\ldots,\eta_q)\in\mathcal{E}_0$. By definition, any f.g.p module $\mathcal{E}$
can be written as $\mathcal{E}=p\mathcal{A}^n$ for some idempotent $p\in M_n(\mathcal{A})$. If this idempotent $p$ is a projection, i,e. $p=p^2=p^*$
then one can restrict the canonical Hermitian structure on $\mathcal{A}^n$ to $\mathcal{E}$, and $\mathcal{E}$ becomes a Hermitian f.g.p module.
Under the hypothesis that $\mathcal{A}$ is stable under holomorphic functional calculus in a $C^*$-algebra $A$, we have the following existence
lemma of Hermitian structure (Lemma $2.2(b)$ in \cite{CG}).

\begin{lemma}$($\cite{CG}$)$\label{existence lemma}
Every f.g.p module $\mathcal{E}$ over $\mathcal{A}$ is isomorphic as a f.g.p module with $p\mathcal{A}^n$, where $p\in M_n(\mathcal{A})$ is a
self-adjoint idempotent i,e. a projection. Hence, $\mathcal{E}$ has a Hermitian structure on it.
\end{lemma}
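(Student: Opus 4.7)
The plan is to reduce the statement to a standard $C^*$-algebraic construction that manufactures a self-adjoint projection from an arbitrary idempotent, and then to use stability of $\mathcal{A}$ under holomorphic functional calculus to descend the resulting projection from $M_n(A)$ back into $M_n(\mathcal{A})$.

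By the definition of finitely generated projective module, there exist $n\in\mathbb{N}$ and an idempotent $e\in M_n(\mathcal{A})$ with $\mathcal{E}\cong e\mathcal{A}^n$ as left $\mathcal{A}$-modules. I would then view $e$ as an idempotent in the unital $C^*$-algebra $M_n(A)$ and invoke the classical Kaplansky-type trick. Set
\[
z := ee^* + (1-e^*)(1-e)\in M_n(A),
\]
which is self-adjoint and bounded below by a positive constant, hence invertible in $M_n(A)$. An elementary calculation using $e^2=e$ gives $ez=ze=ee^*e$ and $e^*z=ze^*=e^*ee^*$, so both $e$ and $e^*$ commute with $z$ (and therefore with $z^{-1}$). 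Defining
\[
p := ee^*\,z^{-1},
\]
one checks directly that $p^*=p$ and $p^2=p$, together with the intertwining relations $ep=p$ and $pe=e$. These relations exhibit the Murray--von Neumann equivalence $e\sim p$ in $M_n(A)$ through explicit elements built out of $e,\,e^*$ and $z^{-1}$, which in particular induces a left-module isomorphism $e\mathcal{A}^n\cong p\mathcal{A}^n$ inside $\mathcal{A}^n$.

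The decisive step is to upgrade everything from $M_n(A)$ to $M_n(\mathcal{A})$. Since $\mathcal{A}$ is stable under holomorphic functional calculus in $A$, so is $M_n(\mathcal{A})$ in $M_n(A)$. The element $z$ lies in $M_n(\mathcal{A})$ because $e$ does; its spectrum is contained in $[c,\infty)$ for some $c>0$; hence $z^{-1}=f(z)$ for a function $f$ holomorphic on a neighborhood of $\mathrm{Spec}(z)$ and equal to $1/\lambda$ there. Consequently $z^{-1}\in M_n(\mathcal{A})$, so $p\in M_n(\mathcal{A})$ and the intertwiners implementing $e\sim p$ also lie in $M_n(\mathcal{A})$. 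This yields the desired isomorphism $\mathcal{E}\cong e\mathcal{A}^n\cong p\mathcal{A}^n$ of left $\mathcal{A}$-modules, with $p=p^*$ a genuine projection. Because $p$ is self-adjoint, the canonical Hermitian form on $\mathcal{A}^n$ restricts to one on $p\mathcal{A}^n$, and transporting this along the isomorphism endows $\mathcal{E}$ with a Hermitian structure.

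The main obstacle is precisely this descent step: one must confirm that every auxiliary element arising in the construction (chiefly $z^{-1}$ and the intertwiners realizing $e\sim p$) lies in the pre-$C^*$-algebra $\mathcal{A}$ and not merely in the ambient $C^*$-algebra $A$. The hypothesis that $\mathcal{A}$ is stable under holomorphic functional calculus in $A$ is exactly what makes this possible, and verifying that $z$ has spectrum bounded away from zero is the concrete analytic input that lets the holomorphic functional calculus be applied to produce $z^{-1}$ inside $M_n(\mathcal{A})$.
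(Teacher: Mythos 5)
Your proof is correct and is the standard Kaplansky/holomorphic-functional-calculus argument; the paper itself offers no proof of this lemma, citing Lemma 2.2(b) of \cite{CG}, and your construction of $z=ee^*+(1-e^*)(1-e)$ and $p=ee^*z^{-1}$ is essentially the argument found there. The one computation worth making explicit is $z=1+(e-e^*)^*(e-e^*)\geq 1$, which supplies the spectral lower bound you invoke before applying holomorphic functional calculus in $M_n(\mathcal{A})$.
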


\begin{remark}\rm
Without the condition of stability under the holomorphic functional calculus, existence of Hermitian structure on an arbitrary f.g.p module is not
guaranteed.
\end{remark}

Recall the following important structure theorem of Hermitian f.g.p module (Th. $[3.3]$ in \cite{CG}).

\begin{theorem}$($\cite{CG}$)$\label{structure thm of herm. mod.}
Let $\,\mathcal{E}$ be a f.g.p $\mathcal{A}$-module with a Hermitian structure on it and $\mathcal{A}$ is stable under the holomorphic functional
calculus in a $C^*$-algebra $A$. Then we have a self-adjoint idempotent $p\in M_n(\mathcal{A})$ such that $\mathcal{E}\cong p\mathcal{A}^n$
as f.g.p module, and $\mathcal{E}$ has the induced canonical Hermitian structure.
\end{theorem}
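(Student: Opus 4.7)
The plan is to reduce the given Hermitian structure on $\mathcal{E}$ to the canonical one on some $p\mathcal{A}^n$. The tool is stability under holomorphic functional calculus, used to extract a positive square root, after which orthogonality forces the resulting idempotent to be self-adjoint.

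First I would fix an $\mathcal{A}$-module complement, writing $\mathcal{E}\oplus\mathcal{F}\cong\mathcal{A}^n$ via some isomorphism $\Psi$, where by Lemma \ref{existence lemma} the complement $\mathcal{F}$ may be equipped with a Hermitian structure of its own. Transporting the orthogonal direct sum of the given Hermitian structure on $\mathcal{E}$ and this chosen one on $\mathcal{F}$ through $\Psi$ yields a new Hermitian structure $\langle\cdot,\cdot\rangle'$ on $\mathcal{A}^n$ for which $\Psi(\mathcal{E})$ and $\Psi(\mathcal{F})$ are orthogonal.

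Next, I would compare $\langle\cdot,\cdot\rangle'$ with the canonical structure $\langle\cdot,\cdot\rangle_\mathcal{A}$ on $\mathcal{A}^n$ by evaluating on the standard basis. This produces a unique matrix $H\in M_n(\mathcal{A})$, positive and invertible, with $\langle\xi,\eta\rangle'=\langle H\xi,\eta\rangle_\mathcal{A}$. The hypothesis enters here: $M_n(\mathcal{A})$ is stable under holomorphic functional calculus inside $M_n(A)$, the spectrum of $H$ is a compact subset of $(0,\infty)$, and the square root is holomorphic on a neighbourhood of this spectrum, so $H^{1/2}\in M_n(\mathcal{A})$ is again positive and invertible. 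The $\mathcal{A}$-linear automorphism $U:\xi\mapsto H^{1/2}\xi$ of $\mathcal{A}^n$ is then an isometry from $(\mathcal{A}^n,\langle\cdot,\cdot\rangle')$ onto $(\mathcal{A}^n,\langle\cdot,\cdot\rangle_\mathcal{A})$.

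Finally, let $p\in M_n(\mathcal{A})$ be the idempotent with image $U\Psi(\mathcal{E})$ and kernel $U\Psi(\mathcal{F})$. Since $U$ is an isometry into the canonical structure and the two subspaces are orthogonal with respect to $\langle\cdot,\cdot\rangle'$, their images are orthogonal with respect to $\langle\cdot,\cdot\rangle_\mathcal{A}$, which forces $p$ to be self-adjoint. Composing $U\circ\Psi$ and restricting to $\mathcal{E}$ gives the desired isomorphism $\mathcal{E}\cong p\mathcal{A}^n$, intertwining the given Hermitian structure with the induced canonical one. I expect the main technical point to be verifying that holomorphic functional calculus is truly available at the matrix level: one has to observe that $M_n(\mathcal{A})$ inherits stability under holomorphic functional calculus from $\mathcal{A}$ and that the spectrum of $H$ is the same computed in $M_n(\mathcal{A})$ or in $M_n(A)$, so that $H^{1/2}$ genuinely lies in $M_n(\mathcal{A})$; the rest is bookkeeping with orthogonal direct sums.
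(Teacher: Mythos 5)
Your argument is correct, but note that the paper itself gives no proof of this statement: it is imported verbatim from reference [CG] (Theorem 3.3 there), so there is nothing internal to compare against. Your route --- complement $\mathcal{E}$ to a free module, encode the transported Hermitian structure as a positive invertible $H\in M_n(\mathcal{A})$, use stability of $M_n(\mathcal{A})$ under holomorphic functional calculus to form $H^{1/2}$, and observe that orthogonality of image and kernel forces the resulting idempotent to be self-adjoint --- is the standard proof and is essentially the one given in [CG]; the two technical points you flag (spectral invariance passing to matrix algebras, and invertibility of $H$ coming from nondegeneracy of the Hermitian structure) are exactly the ones that need checking.
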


In his book (\cite{Con1}), Connes has suggested that in the context of Hermitian f.g.p module one should always work with spectrally invariant
algebras, i,e. subalgebras of $C^*$-algebras stable under the holomorphic functional calculus. The reason is that all possible notions of positivity
will coincide in that case. Moreover, we will also have Th. (\ref{structure thm of herm. mod.}) which makes computations involving the Hermitian
structure much easier. Incorporating Connes' suggestion we will always work with spectrally invariant algebras in this article. Note that in the
classical case of manifold $\mathbb{M},\,C^\infty(\mathbb{M})$ is spectrally invariant subalgebra of the unital $C^*$-algebra $C(\mathbb{M})$.
Let $\Omega_{D}^1(\mathcal{A})$ be the $\mathcal{A}$-bimodule $\{\sum\,a_j[D,b_j]:a_j,b_j\in\mathcal{A}\}$ of noncommutative
$1$-forms and $d:\mathcal{A}\rightarrow\Omega_{D}^1(\mathcal{A})$, given by $a\mapsto[D,a]$, be the Dirac dga differential (\cite{Con1}).
Note that $(da)^*=-da^*$ by convention.

\begin{definition}
Let $\,\mathcal{E}$ be a f.g.p left module over $\mathcal{A}$ with a Hermitian structure $\langle\,\,\,,\,\,\rangle_\mathcal{A}$ on it. A compatible
connection on $\mathcal{E}$ is a $\mathbb{C}$-linear map $\nabla:\mathcal{E}\longrightarrow\Omega_{D}^1(\mathcal{A})\otimes_\mathcal{A}
\mathcal{E}$ satisfying
\begin{enumerate}
\item[(a)] $\nabla(a\xi)=a(\nabla\xi)+da\otimes\xi\,,\,\,\forall\,\xi\in\mathcal{E},\,a\in\mathcal{A};$
\item[(b)] $\langle\,\nabla\xi,\eta\,\rangle-\langle\,\xi,\nabla\eta\,\rangle=d\langle\,\xi,\eta\,\rangle_\mathcal{A}\,\,\,\forall\,\xi,\eta\in
\mathcal{E}$.
\end{enumerate}
\end{definition}

Meaning of equality $(b)$ in $\Omega_D^1(\mathcal{A})$ is, if $\nabla(\eta)=\sum\omega_j\otimes\eta_j\in\Omega_D^1(\mathcal{A})\otimes
\mathcal{E}$, then $\langle\,\xi,\nabla\eta\,\rangle=\sum\,\langle\xi,\eta_j\rangle_\mathcal{A}\,\omega_j^*$. Compatible connection always exists
(\cite{Con1}). The space of all compatible connections on $\mathcal{E}$, which we denote by $C(\mathcal{E})$, is an affine space with associated
vector space $\mathcal{H}om_\mathcal{A}(\mathcal{E},\Omega_D^1(\mathcal{A})\otimes_\mathcal{A}\mathcal{E})$.
\medskip

\textbf{A procedure to extend a $N=1$ spectral data to $N=(1,1)$ spectral data~:}\\
Start with a $N=1$ spectral data $(\mathcal{A},\mathcal{H},D,\gamma)$ equipped with a real structure $J$ (\cite{Con2},\cite{Con3}). That is,
there exists an anti-unitary operator $J$ on $\mathcal{H}$ such that
\begin{center}
$J^2=\varepsilon I\quad,\quad JD=\varepsilon^\prime DJ\quad,\quad J\gamma=\varepsilon^{\prime\prime}\gamma J\quad$
\end{center}
for some signs $\,\varepsilon,\varepsilon^\prime,\varepsilon^{\prime\prime}=\pm 1$ depending on $KO$-dimension $n\in\mathbb{Z}_8\,:$
\begin{table}[ht]
\centering
\begin{tabular}{c| c c c c c c c c c}
\hline
$n$ & $0$ & $2$ & $4$ & $6$ &$\,$ & $1$ & $3$ & $5$ & $7$ \\ [0.8ex]
\hline
$\varepsilon$ & $+$ & $-$ & $-$ & $+$ & $\,$ & $+$ & $-$ & $-$ & $+$\\
$\varepsilon^\prime$  & $+$ & $+$ & $+$ & $+$ & $\,$ & $-$ & $+$ & $-$ & $+$\\
$\varepsilon^{\prime\prime}$ & $+$ & $-$ & $+$ & $-$ & $\,$ & $\,$ & $\,$ & $\,$ & $\,$\\
\hline
\end{tabular}
\end{table}\\
and satisfying $[JaJ^*,b]=[JaJ^*,[D,b]]=0\,\,\forall\,a,b\in\mathcal{A}$. The real structure $J$ now enables us to equip the Hilbert space
$\mathcal{H}$ with an $\mathcal{A}$-bimodule structure
\begin{center}
$a\,.\,\xi\,.\,b:=\pi(a)Jb^*J^*\xi\,.$
\end{center}
We can extend this to a right action of $\,\Omega_D^1(\mathcal{A}):=\{\sum_j\,a_j[D,b_j]:a_j,b_j\in\mathcal{A}\}$ on $\mathcal{H}$ by the rule
\begin{center}
$\xi\,.\,\omega:=J\omega^*J^*\xi\,.$
\end{center}
Assume that $\mathcal{H}$ contains a dense f.g.p left $\mathcal{A}$-module $\mathcal{E}$ which is stable under $J$ and $\gamma$. In particular,
$\mathcal{E}$ is itself an $\mathcal{A}$-bimodule. Since, $\mathcal{A}$ is spectrally invariant subalgebra in a $C^*$-algebra $A$ we have
a Hermitian structure $\langle\,\,,\,\rangle_\mathcal{A}$ on $\mathcal{E}$ (by Lemma [\ref{existence lemma}]), which makes $\mathcal{E}
\otimes_\mathcal{A}\mathcal{E}$ into an inner-product space by the following rule~:
\begin{eqnarray}\label{the inner product}
\langle\xi\otimes\eta\,,\,\xi^\prime\otimes\eta^\prime\rangle:=\langle\eta\,,\,\langle\xi,\xi^\prime\rangle_\mathcal{A}(\eta^\prime)\rangle\,.
\end{eqnarray}
Let $\,\widetilde{\mathcal{H}}:=\overline{\mathcal{E}
\otimes_\mathcal{A}\mathcal{E}}^{\langle\,\,,\,\rangle}$. Define the following anti-linear flip operator
\begin{align*}
\Psi:\Omega_D^1(\mathcal{A})\otimes_\mathcal{A}\mathcal{E} &\longrightarrow \mathcal{E}\otimes_\mathcal{A}\Omega_D^1(\mathcal{A})\\
\omega\otimes\xi &\longmapsto J\xi\otimes\omega^*
\end{align*}
It is easy to verify that $\Psi$ is well-defined and satisfies $\Psi(as)=\Psi(s)a^*,\,\,\forall\,s\in\Omega_D^1(\mathcal{A})\otimes_\mathcal{A}
\mathcal{E}$. Consider a compatible connection
\begin{center}
$\nabla:\mathcal{E}\longrightarrow\Omega_D^1(\mathcal{A})\otimes_\mathcal{A}\mathcal{E}$
\end{center}
such that $\nabla$ commutes with the grading $\gamma$ on $\mathcal{E}\subseteq\mathcal{H}$, i,e. $\nabla\gamma\xi=(1\otimes\gamma)\nabla\xi,\,
\forall\,\xi\in\mathcal{E}$. For each such connection $\nabla$ on $\mathcal{E}$, there is the following associated right-connection
\begin{align*}
\overline{\nabla}:\mathcal{E} &\longrightarrow \mathcal{E}\otimes_\mathcal{A}\Omega_D^1(\mathcal{A})\\
\xi &\longmapsto -\Psi(\nabla J^*\xi)
\end{align*}
Thus, we get a $\mathbb{C}$-linear map (the so called ``tensored connection'')
\begin{align*}
\widetilde\nabla:\mathcal{E}\otimes_\mathcal{A}\mathcal{E} &\longrightarrow \mathcal{E}\otimes_\mathcal{A}\Omega_D^1(\mathcal{A})\otimes_\mathcal{A}
\mathcal{E}\\
\xi_1\otimes\xi_2 &\longmapsto \overline{\nabla}\xi_1\otimes\xi_2+\xi_1\otimes\nabla\xi_2
\end{align*}
Note that $\widetilde\nabla$ is not a connection in the usual sense because of the position of $\,\Omega_D^1(\mathcal{A})$. Define the following two
$\mathbb{C}$-linear maps
\begin{align*}
c\,,\,\overline{c}\,:\mathcal{E}\otimes_\mathcal{A}\Omega_D^1(\mathcal{A})\otimes_\mathcal{A}\mathcal{E} &\longrightarrow \mathcal{E}
\otimes_\mathcal{A}\mathcal{E}\\
c:\xi_1\otimes\omega\otimes\xi_2 &\longmapsto \xi_1\otimes\omega\,.\,\xi_2\\
\overline{c}:\xi_1\otimes\omega\otimes\xi_2 &\longmapsto \xi_1\,.\,\omega\otimes\gamma\xi_2
\end{align*}
Now, introduce the following densely defined unbounded operators on $\widetilde{\mathcal{H}}$
\begin{center}
$\mathfrak{D}:=c\circ\widetilde{\nabla}\quad,\quad\overline{\mathfrak{D}}:=\overline{c}\circ\widetilde{\nabla}$
\end{center}
(Caution: $\overline{\mathfrak{D}}$ is not the closure of $\mathfrak{D}$). In order to obtain a set of $N=(1,1)$ spectral data on $\mathcal{A}$,
one has to find a specific connection $\nabla$ on a suitable dense f.g.p left $\mathcal{A}$-module $\mathcal{E}$ such that
\begin{itemize}
\item[(a)] The operators $\mathfrak{D}$ and $\overline{\mathfrak{D}}$ become essentially self-adjoint on $\widetilde{\mathcal{H}}$,
\item[(b)] The relations $\mathfrak{D}^2=\overline{\mathfrak{D}}^2$ and $\{\mathfrak{D},\overline{\mathfrak{D}}\}=0$ are satisfied.
\end{itemize}
The $\mathbb{Z}_2$-grading on $\widetilde{\mathcal{H}}$ is simply the tensor product grading $\,\widetilde{\gamma}:=\gamma\otimes\gamma$, and the
Hodge operator is taken to be $\,\star:=1\otimes\gamma$ (In \cite{FGR2}, this is mistakenly taken as $\star=\gamma\otimes 1$). The sextuple
$(\mathcal{A},\widetilde{\mathcal{H}},\mathfrak{D},\overline{\mathfrak{D}},\widetilde{\gamma},\star)$ is a candidate of a $N=(1,1)$ spectral data
extending the $N=1$ spectral data $(\mathcal{A},\mathcal{H},D,\gamma)$. The Hodge operator $\star$ additionally satisfies $\,\star^2=1$ and
$[\star,\gamma]=0$.
\medskip

We denote this procedure to extend a $N=1$ spectral data to $N=(1,1)$ spectral data over the same base space $\mathcal{A}$ by the shorthand
notation $\Phi:N=1\Longrightarrow N=(1,1)$.

\begin{definition}\label{product of N=1 data}$($\cite{Con1}$)$
For two $N=1$ spectral data $(\mathcal{A}_j,\mathcal{H}_j,D_j,\gamma_j),\,j=1,2,$ their Kasparov product is defined as $(\mathcal{A}_1\otimes
\mathcal{A}_2\,,\,\mathcal{H}_1\otimes\mathcal{H}_2\,,\,D_1\otimes 1+\gamma_1\otimes D_2\,,\,\gamma_1\otimes\gamma_2)$.
\end{definition}

Note that one can also take the Dirac operator $D_1\otimes\gamma_2+1\otimes D_2\,$. In that case the product $N=1$ spectral data $(\mathcal{A}_1\otimes
\mathcal{A}_2\,,\,\mathcal{H}_1\otimes\mathcal{H}_2\,,\,D_1\otimes 1+\gamma_1\otimes D_2\,,\,\gamma_1\otimes\gamma_2)$ and $(\mathcal{A}_1\otimes
\mathcal{A}_2\,,\,\mathcal{H}_1\otimes\mathcal{H}_2\,,\,D_1\otimes\gamma_2+1\otimes D_2\,,\,\gamma_1\otimes\gamma_2)$ become unitary equivalent, and
one such unitary acting on $\mathcal{H}_1\otimes\mathcal{H}_2$ is given by (\cite{Van})
\begin{center}
$U:=\frac{1}{2}(1\otimes 1+\gamma_1\otimes 1+1\otimes\gamma_2-\gamma_1\otimes\gamma_2)\,.$
\end{center}

\begin{definition}\label{product of N=(1,1) data}
For two $N=(1,1)$ spectral data $(\mathcal{A}_j,\mathcal{H}_j,\mathfrak{D}_j,\overline{\mathfrak{D}}_j,\gamma_j,\star_j),\,j=1,2,$ we define their
product to be $(\mathcal{A}:=\mathcal{A}_1\otimes\mathcal{A}_2\,,\mathcal{H}:=\mathcal{H}_1\otimes\mathcal{H}_2\,,\,\mathfrak{D}:=\mathfrak{D}_1
\otimes 1+\star_1\otimes\mathfrak{D}_2\,,\,\overline{\mathfrak{D}}:=\overline{\mathfrak{D}_1}\otimes\star_2+\gamma_1\otimes\overline{\mathfrak{D}_2}
\,,\,\gamma:=\gamma_1\otimes\gamma_2\,,\,\star:=\star_1\otimes\star_2)$.
\end{definition}

\begin{lemma}
The above defined product is well-defined.
\end{lemma}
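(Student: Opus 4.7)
The plan is to verify each clause of Definition \ref{N=(1,1) spectral data} for the product sextuple, using the equivalent $(\mathfrak{D},\overline{\mathfrak{D}})$ description and Lemma \ref{crucial relation involving Hodge}. The faithful representation of $\mathcal{A}_1\otimes\mathcal{A}_2$ on $\mathcal{H}_1\otimes\mathcal{H}_2$, the grading property of $\gamma=\gamma_1\otimes\gamma_2$, the commutation of $\gamma$ and $\star=\star_1\otimes\star_2$ with the algebra, and unitarity of $\star$ all reduce to the corresponding statements on each factor and are immediate tensor-product facts.

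The algebraic heart of the proof is the list of six relations $\mathfrak{D}^2=\overline{\mathfrak{D}}^2$, $\{\mathfrak{D},\overline{\mathfrak{D}}\}=0$, $\{\gamma,\mathfrak{D}\}=\{\gamma,\overline{\mathfrak{D}}\}=0$, $\{\star,\mathfrak{D}\}=0$, $[\star,\overline{\mathfrak{D}}]=0$; each is a direct expansion. For $\mathfrak{D}^2$ the cross term $\{\mathfrak{D}_1,\star_1\}\otimes\mathfrak{D}_2$ vanishes and $\star_1^2=1$ leaves $\mathfrak{D}_1^2\otimes 1+1\otimes\mathfrak{D}_2^2$; analogously $\overline{\mathfrak{D}}^2=\overline{\mathfrak{D}_1}^2\otimes 1+1\otimes\overline{\mathfrak{D}_2}^2$ using $\{\gamma_1,\overline{\mathfrak{D}_1}\}=0$, $[\star_2,\overline{\mathfrak{D}_2}]=0$, and $\gamma_1^2=\star_2^2=1$, and equality follows from $\mathfrak{D}_j^2=\overline{\mathfrak{D}_j}^2$. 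For $\{\mathfrak{D},\overline{\mathfrak{D}}\}$ the four cross pieces collapse separately via $\{\mathfrak{D}_j,\overline{\mathfrak{D}_j}\}=0$, $\{\gamma_1,\mathfrak{D}_1\}=0$, $\{\star_2,\mathfrak{D}_2\}=0$, together with the commutations $[\star_j,\gamma_j]=0$ (the self-adjoint-unitary-commuting-with-grading normalization imposed in the paragraph before Lemma \ref{crucial relation involving Hodge}). The remaining four grading/Hodge identities follow the same template, each reducing on expansion to one commutator or anti-commutator relation per factor plus the involution identities $\star_j^2=\gamma_j^2=1$ and $[\star_j,\gamma_j]=0$.

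For the analytic axioms, boundedness of $[d,a_1\otimes a_2]$ follows from boundedness of $[\mathfrak{D},a_1\otimes a_2]$ and $[\overline{\mathfrak{D}},a_1\otimes a_2]$; expansion using $[\star_1,a_1]=0$ and $[\gamma_1,a_1]=0$ gives
\begin{align*}
[\mathfrak{D},a_1\otimes a_2]&=[\mathfrak{D}_1,a_1]\otimes a_2+\star_1 a_1\otimes[\mathfrak{D}_2,a_2],\\
[\overline{\mathfrak{D}},a_1\otimes a_2]&=[\overline{\mathfrak{D}_1},a_1]\otimes\star_2 a_2+\gamma_1 a_1\otimes[\overline{\mathfrak{D}_2},a_2],
\end{align*}
with each summand bounded because $[\mathfrak{D}_j,a_j]$ and $[\overline{\mathfrak{D}_j},a_j]$ are (use $\mathfrak{D}_j=d_j+d_j^*$ together with $[d_j^*,a_j]=-[d_j,a_j^*]^*$). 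The trace-class hypothesis is then immediate: $e^{-\varepsilon\mathfrak{D}^2}=e^{-\varepsilon\mathfrak{D}_1^2}\otimes e^{-\varepsilon\mathfrak{D}_2^2}$ is a tensor product of trace-class operators.

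The main obstacle is essential self-adjointness of $\mathfrak{D}$ and $\overline{\mathfrak{D}}$ on the algebraic tensor product of natural cores. Each is symmetric because its two summands are products of commuting self-adjoint factors in disjoint tensor slots; the cross-term computation above shows the two summands anti-commute, so the square is diagonal and non-negative, namely $\mathfrak{D}^2=\mathfrak{D}_1^2\otimes 1+1\otimes\mathfrak{D}_2^2$ (and similarly for $\overline{\mathfrak{D}}^2$), essentially self-adjoint on the algebraic tensor product of cores of $\mathfrak{D}_j^2$. One then invokes the standard criterion (precisely the one used implicitly for the Kasparov product recorded in Definition \ref{product of N=1 data}) that a symmetric operator with anti-commuting essentially self-adjoint summands, or equivalently with essentially self-adjoint square on a common dense core, is itself essentially self-adjoint there. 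Closing both operators and setting $d=\tfrac{1}{2}(\mathfrak{D}-i\overline{\mathfrak{D}})$ yields a densely defined closed operator with $d^2=0$, completing verification of all clauses of Definition \ref{N=(1,1) spectral data}.
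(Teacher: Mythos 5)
Your proof is correct and follows essentially the same route as the paper: a direct verification of the defining relations of the product $N=(1,1)$ data, using the component identities $\{\star_j,\mathfrak{D}_j\}=[\star_j,\overline{\mathfrak{D}_j}]=0$, $\{\gamma_j,\mathfrak{D}_j\}=\{\gamma_j,\overline{\mathfrak{D}_j}\}=0$, $[\star_j,\gamma_j]=0$ and $\star_j^2=\gamma_j^2=1$, together with the factorization $e^{-\varepsilon\mathfrak{D}^2}=e^{-\varepsilon\mathfrak{D}_1^2}\otimes e^{-\varepsilon\mathfrak{D}_2^2}$. You supply analytic details that the paper's terse proof leaves implicit --- the bounded-commutator check and the essential self-adjointness of $\mathfrak{D}$ and $\overline{\mathfrak{D}}$ (which the paper only addresses later, in Lemma \ref{relations among Dirac operators}, by appeal to Dabrowski--Dossena) --- but the core argument is the same.
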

\begin{proof}
Since, $\{\star_1,\mathfrak{D}_1\}=[\,\star_2,\overline{\mathfrak{D}_2}\,]=0$ we have $\,\mathfrak{D}^2=\overline{\mathfrak{D}}^{\,2}$. Now,
$[\,\star_1,\overline{\mathfrak{D}_1}\,]=\{\star_2,\mathfrak{D}_2\}=0$ implies that $\,\{\mathfrak{D},\overline{\mathfrak{D}}\}=0$. Finally,
\begin{center}
$Tr\left(exp(-\varepsilon\mathfrak{D}^2)\right)=Tr\left(exp(-\varepsilon\mathfrak{D}_1^2)\right)Tr\left(exp(-\varepsilon\mathfrak{D}_2^2)\right)<\infty$
\end{center}
for all $\varepsilon>0,\,\{\gamma,\mathfrak{D}\}=\{\gamma,\overline{\mathfrak{D}}\}=0$ and $\{\star,\mathfrak{D}\}=[\star,\overline{\mathfrak{D}}\,]=0$.
\end{proof}

\begin{lemma}
The associated $N=1$ spectral data of the product of two $N=(1,1)$ spectral data is unitary equivalent with the product of the associated $N=1$
spectral data of the individual $N=(1,1)$ spectral data.
\end{lemma}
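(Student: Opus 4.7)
The plan is to exhibit an explicit self-adjoint unitary $V$ on $\mathcal{H}_1 \otimes \mathcal{H}_2$ that intertwines the two candidate Dirac operators while commuting with the common algebra action and grading. Taking $\mathfrak{D}$ as the Dirac operator of the $N=1$ datum associated to a given $N=(1,1)$ datum, the $N=1$ spectral data associated to the product $N=(1,1)$ reads (by Definition~\ref{product of N=(1,1) data})
$$\bigl(\mathcal{A}_1 \otimes \mathcal{A}_2,\; \mathcal{H}_1 \otimes \mathcal{H}_2,\; \mathfrak{D}_1 \otimes 1 + \star_1 \otimes \mathfrak{D}_2,\; \gamma_1 \otimes \gamma_2\bigr),$$
while the Kasparov product of Definition~\ref{product of N=1 data} applied to the individual associated $N=1$ data gives
$$\bigl(\mathcal{A}_1 \otimes \mathcal{A}_2,\; \mathcal{H}_1 \otimes \mathcal{H}_2,\; \mathfrak{D}_1 \otimes 1 + \gamma_1 \otimes \mathfrak{D}_2,\; \gamma_1 \otimes \gamma_2\bigr).$$
The two sextuples agree in every slot except the second summand of the Dirac operator, so it suffices to produce a unitary that fixes $\mathfrak{D}_1 \otimes 1$, $\mathcal{A}_1 \otimes \mathcal{A}_2$, and $\gamma_1 \otimes \gamma_2$, while conjugating $\gamma_1 \otimes \mathfrak{D}_2$ into $\star_1 \otimes \mathfrak{D}_2$.

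The construction hinges on the auxiliary operator $u := \gamma_1 \star_1$ on $\mathcal{H}_1$. Since $\gamma_1$ and $\star_1$ are commuting self-adjoint unitaries (both by hypothesis) each anticommuting with $\mathfrak{D}_1$ (by Lemma~\ref{crucial relation involving Hodge}), one checks directly that $u$ is a self-adjoint unitary commuting with $\mathcal{A}_1$, $\gamma_1$, $\star_1$, and also with $\mathfrak{D}_1$ itself. Let $P^\pm := \tfrac{1}{2}(1 \pm u)$ be its orthogonal spectral projections; on $\mathrm{Ran}(P^+)$ one has $\star_1 = \gamma_1$, and on $\mathrm{Ran}(P^-)$ one has $\star_1 = -\gamma_1$. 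Define
$$V := P^+ \otimes 1 + P^- \otimes \star_2.$$
The identities $P^+ P^- = 0$, $(P^\pm)^2 = P^\pm$, and $\star_2^* = \star_2,\; \star_2^2 = 1$ make $V$ a self-adjoint unitary ($V^2 = 1$).

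The verification reduces to a handful of commutation checks together with one genuine conjugation. Because $u$ is central in $\mathcal{A}_1$ and commutes with $\gamma_1$ and $\mathfrak{D}_1$, while $\star_2$ commutes with $\mathcal{A}_2$ and $\gamma_2$, the unitary $V$ automatically commutes with $\mathcal{A}_1 \otimes \mathcal{A}_2$, with $\gamma_1 \otimes \gamma_2$, and with $\mathfrak{D}_1 \otimes 1$. The decisive step, and the only one requiring the defining anticommutation of the Hodge operator, is the computation
$$V(1 \otimes \mathfrak{D}_2)V^* = u \otimes \mathfrak{D}_2,$$
which one obtains by expanding, collapsing the cross terms via $P^+ P^- = 0$, and invoking $\{\star_2, \mathfrak{D}_2\} = 0$ from Lemma~\ref{crucial relation involving Hodge}(2) in the surviving $P^- \otimes \star_2 \mathfrak{D}_2 \star_2 = -P^- \otimes \mathfrak{D}_2$ term. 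Combining this with $[V, \gamma_1 \otimes 1] = 0$ yields $V(\gamma_1 \otimes \mathfrak{D}_2)V^* = \gamma_1 u \otimes \mathfrak{D}_2 = \star_1 \otimes \mathfrak{D}_2$, completing the intertwining. The main (indeed the only) obstacle is this last conjugation: if one had not noticed the central role of $u = \gamma_1 \star_1$ in linking the two available self-adjoint unitaries that anticommute with $\mathfrak{D}_1$, there would be no natural way to interpolate between $\gamma_1$ and $\star_1$ in the tensor factor; once $V$ is written down, every remaining step is an automatic consequence of the $N=(1,1)$ axioms.
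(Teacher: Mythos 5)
Your proof is correct, and it arrives at essentially the same unitary as the paper, but by a more direct route. The paper factors the equivalence through the intermediate operator $\mathfrak{D}_1\otimes\gamma_2+1\otimes\mathfrak{D}_2$ and composes two Vanhecke-type unitaries, $U=\tfrac{1}{2}(1\otimes 1+\star_1\otimes 1+1\otimes\gamma_2-\star_1\otimes\gamma_2)$ followed by $V=\tfrac{1}{2}(1\otimes 1+\gamma_1\otimes 1+1\otimes\gamma_2-\gamma_1\otimes\gamma_2)$; if you expand that composite in terms of the spectral projections of $\gamma_1$ and $\star_1$, it collapses to exactly $P^+\otimes 1+P^-\otimes\gamma_2$ with $P^\pm=\tfrac{1}{2}(1\pm\gamma_1\star_1)$, which is your $V$ with $\gamma_2$ in place of $\star_2$ in the second leg (either choice works, since all that is used of the second factor is a self-adjoint unitary commuting with $\mathcal{A}_2$ and $\gamma_2$ and anticommuting with $\mathfrak{D}_2$). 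What your presentation buys is conceptual clarity: isolating $u=\gamma_1\star_1$ as the commuting ``interpolation'' between the two anticommuting involutions makes the mechanism transparent and avoids the intermediate Dirac operator. You are also more careful than the paper in one respect: the paper only verifies that the unitaries intertwine the Dirac operators, whereas you explicitly check that $V$ also commutes with the algebra action $\mathcal{A}_1\otimes\mathcal{A}_2$ and the grading $\gamma_1\otimes\gamma_2$, which is genuinely required for a unitary equivalence of $N=1$ spectral data. No gaps.
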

\begin{proof}
Let $(\mathcal{A}_j,\mathcal{H}_j,\mathfrak{D}_j,\overline{\mathfrak{D}}_j,\gamma_j,\star_j),\,j=1,2,$ be two $N=(1,1)$ spectral data. The associated
$N=1$ spectral data are $(\mathcal{A}_j,\mathcal{H}_j,\mathfrak{D}_j,\gamma_j),\,j=1,2$. The Dirac operator of the product of these $N=1$ spectral
data is $\mathfrak{D}_1\otimes 1+\gamma_1\otimes\mathfrak{D}_2$ (Def. [\ref{product of N=1 data}]). Now, consider the product $\otimes_{j=1}^2
(\mathcal{A}_j,\mathcal{H}_j,\mathfrak{D}_j,\overline{\mathfrak{D}}_j,\gamma_j,\star_j)$ by Definition (\ref{product of N=(1,1) data}). The associated
$N=1$ spectral data of this product is $(\mathcal{A}_1\otimes\mathcal{A}_2\,,\mathcal{H}_1\otimes\mathcal{H}_2\,,\,\mathfrak{D}:=\mathfrak{D}_1
\otimes 1+\star_1\otimes\mathfrak{D}_2\,,\gamma_1\otimes\gamma_2)$. The operators $\mathfrak{D}_1\otimes 1+\star_1\otimes\mathfrak{D}_2$ and
$\mathfrak{D}_1\otimes\gamma_2+1\otimes\mathfrak{D}_2$ are unitary equivalent by the unitary
\begin{center}
$U:=\frac{1}{2}(1\otimes 1+\star_1\otimes 1+1\otimes\gamma_2-\star_1\otimes\gamma_2)$
\end{center}
where as, the operators $\mathfrak{D}_1\otimes\gamma_2+1\otimes\mathfrak{D}_2$ and $\mathfrak{D}_1\otimes 1+\gamma_1\otimes\mathfrak{D}_2$ are
unitary equivalent by the unitary
\begin{center}
$V:=\frac{1}{2}(1\otimes 1+\gamma_1\otimes 1+1\otimes\gamma_2-\gamma_1\otimes\gamma_2)\,.$
\end{center}
Hence, $\mathfrak{D}_1\otimes 1+\star_1\otimes\mathfrak{D}_2$ and $\mathfrak{D}_1\otimes 1+\gamma_1\otimes\mathfrak{D}_2$ are also unitary equivalent
by the unitary $VU$.
\end{proof}

\begin{proposition}\label{various product of N=(1,1)}
There are various other choice of tensor product of $N=(1,1)$ spectral data w.r.t the natural choice of $\,\gamma:=\gamma_1\otimes\gamma_2$ and
$\,\star:=\star_1\otimes\star_2\,:$
\begin{enumerate}
\item $\mathfrak{D}:=\mathfrak{D}_1\otimes 1+\star_1\otimes\mathfrak{D}_2$ and $\,\overline{\mathfrak{D}}:=\overline{\mathfrak{D}_1}\otimes\gamma_2+
\star_1\otimes\overline{\mathfrak{D}_2}$
\item $\mathfrak{D}:=\mathfrak{D}_1\otimes\star_2+1\otimes\mathfrak{D}_2$ and $\,\overline{\mathfrak{D}}:=\overline{\mathfrak{D}_1}\otimes\gamma_2+
\star_1\otimes\overline{\mathfrak{D}_2}$
\item $\mathfrak{D}:=\mathfrak{D}_1\otimes\star_2+1\otimes\mathfrak{D}_2$ and $\,\overline{\mathfrak{D}}:=\overline{\mathfrak{D}_1}\otimes\star_2+
\gamma_1\otimes\overline{\mathfrak{D}_2}$
\item $\mathfrak{D}:=\mathfrak{D}_1\otimes 1+\gamma_1\otimes\mathfrak{D}_2$ and $\,\overline{\mathfrak{D}}:=\overline{\mathfrak{D}_1}\otimes 1+
\gamma_1\otimes\overline{\mathfrak{D}_2}$
\item $\mathfrak{D}:=\mathfrak{D}_1\otimes\gamma_2+1\otimes\mathfrak{D}_2$ and $\,\overline{\mathfrak{D}}:=\overline{\mathfrak{D}_1}\otimes\gamma_2+
1\otimes\overline{\mathfrak{D}_2}$
\end{enumerate}
\end{proposition}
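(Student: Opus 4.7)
The plan is to verify, for each of the five candidate pairs $(\mathfrak{D},\overline{\mathfrak{D}})$ listed in (1)--(5), the four defining conditions of Definition \ref{N=(1,1) spectral data} rephrased in the equivalent sextuple formulation via Lemma \ref{crucial relation involving Hodge}. Concretely, for each case I would check: (a) $\mathfrak{D}^2=\overline{\mathfrak{D}}^{\,2}$; (b) $\{\mathfrak{D},\overline{\mathfrak{D}}\}=0$; (c) $\exp(-\varepsilon\mathfrak{D}^2)$ is trace class for every $\varepsilon>0$; (d) $\{\gamma,\mathfrak{D}\}=\{\gamma,\overline{\mathfrak{D}}\}=0$ with $\gamma=\gamma_1\otimes\gamma_2$; and (e) $\{\star,\mathfrak{D}\}=0$ together with $[\star,\overline{\mathfrak{D}}\,]=0$ with $\star=\star_1\otimes\star_2$.

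Every one of these checks reduces, after expanding the two-term tensor expressions, to bookkeeping that uses only the following library of identities coming from the individual $N=(1,1)$ data and the standing assumption that each $\star_j$ is a self-adjoint unitary commuting with $\gamma_j$: namely $\mathfrak{D}_j^{\,2}=\overline{\mathfrak{D}}_j^{\,2}$, $\{\mathfrak{D}_j,\overline{\mathfrak{D}}_j\}=0$, $\{\gamma_j,\mathfrak{D}_j\}=\{\gamma_j,\overline{\mathfrak{D}}_j\}=0$, $\{\star_j,\mathfrak{D}_j\}=0$, $[\star_j,\overline{\mathfrak{D}}_j]=0$, $[\gamma_j,\star_j]=0$, and $\gamma_j^{\,2}=\star_j^{\,2}=1$.

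For step (a) the expansion of $\mathfrak{D}^2$ produces two ``diagonal'' terms $A^2\otimes B^2$ that collapse to $\mathfrak{D}_1^{\,2}\otimes 1+1\otimes\mathfrak{D}_2^{\,2}$ (using $\gamma_j^{\,2}=\star_j^{\,2}=1$), and two cross terms that combine into an anticommutator of the form $\{\mathfrak{D}_j,\gamma_j\}$ or $\{\mathfrak{D}_j,\star_j\}$ tensored with a single operator, which vanishes by the library; the computation of $\overline{\mathfrak{D}}^{\,2}$ is entirely parallel once one notes that $\{\gamma_j,\overline{\mathfrak{D}}_j\}=0$ cancels the cross terms there, so $\mathfrak{D}^2=\overline{\mathfrak{D}}^{\,2}$. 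For (b) one expands $\mathfrak{D}\overline{\mathfrak{D}}+\overline{\mathfrak{D}}\mathfrak{D}$ into eight monomials, regroups by slot, and uses $[\star_j,\overline{\mathfrak{D}}_j]=0$ to pull the auxiliary factor through, and then $\{\gamma_j,\mathfrak{D}_j\}=0$ (or a dual variant) to cancel what remains. Item (c) is automatic because in every case $\mathfrak{D}^2$ simplifies to $\mathfrak{D}_1^{\,2}\otimes 1+1\otimes\mathfrak{D}_2^{\,2}$, so $\exp(-\varepsilon\mathfrak{D}^2)=\exp(-\varepsilon\mathfrak{D}_1^{\,2})\otimes\exp(-\varepsilon\mathfrak{D}_2^{\,2})$ is a tensor product of trace class operators. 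Conditions (d) and (e) follow the same expand-and-cancel recipe, in which $[\gamma_1,\star_1]=0$ is invoked to commute the auxiliary factors past one another before applying the appropriate (anti)commutation identity in each slot.

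The only mild subtlety, and the thing to be careful about, is that in (1)--(3) the same tensor slot mixes $\gamma_j$'s and $\star_j$'s, and one has to keep straight which identity is being invoked, since $\{\gamma_j,\mathfrak{D}_j\}=0$ but $\{\star_j,\mathfrak{D}_j\}=0$ while $[\star_j,\overline{\mathfrak{D}}_j]=0$ rather than an anticommutator. I would therefore handle cases (4) and (5) first, as they involve only $\gamma_j$'s in the cross slots and reproduce the pattern of the lemma preceding this proposition almost verbatim, and then treat (1), (2), (3) in turn, in each case spelling out which member of the library forces the relevant cross term to vanish. No genuine obstacle is expected; the only risk is a sign or slot bookkeeping slip, which is why the verifications are best tabulated case by case.
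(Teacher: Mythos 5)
Your proposal is correct and follows essentially the same route as the paper, whose proof simply invokes ``the relationship between $\gamma_j$ and $\star_j$ with $\mathfrak{D}_j$ and $\overline{\mathfrak{D}_j}$'' --- that is, exactly the library of identities $\{\gamma_j,\mathfrak{D}_j\}=\{\gamma_j,\overline{\mathfrak{D}}_j\}=\{\star_j,\mathfrak{D}_j\}=[\star_j,\overline{\mathfrak{D}}_j]=[\gamma_j,\star_j]=0$ and $\gamma_j^2=\star_j^2=1$ that you tabulate, applied case by case to the expand-and-cancel verification of the defining relations. Your write-up merely makes explicit what the paper leaves as a one-line remark, and your flagged subtlety about which (anti)commutation identity governs each slot in cases (1)--(3) is exactly the right thing to watch.
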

\begin{proof}
Proof is straightforward using the relationship between $\gamma_j$ and $\star_j$ with $\mathfrak{D}_j$ and $\overline{\mathfrak{D}_j}\,$ for
$j=1,2$.
\end{proof}

Now, suppose we have two $N=1$ spectral data $(\mathcal{A}_j,\mathcal{H}_j,D_j,\gamma_j),\,j=1,2,$ such that $\Phi(\mathcal{A}_j,\mathcal{H}_j,D_j,
\gamma_j)$ indeed give us two $N=(1,1)$ spectral data. One can consider the product of $(\mathcal{A}_j,\mathcal{H}_j,D_j,\gamma_j),\,j=1,2,$ and ask
the following questions.

\begin{enumerate}
\item[]\textbf{Question $1$:} Does $\Phi\left(\otimes_{j=1}^2(\mathcal{A}_j,\mathcal{H}_j,D_j,\gamma_j)\right)$ also give a $N=(1,1)$ spectral data?
\item[]\textbf{Question $2$:} If answer to the previous question is affirmative, then is it always true that
\begin{center}
$\Phi\left(\otimes_{j=1}^2(\mathcal{A}_j,\mathcal{H}_j,D_j,\gamma_j)\right)=\otimes_{j=1}^2\Phi\left((\mathcal{A}_j,\mathcal{H}_j,D_j,\gamma_j)\right)$
\end{center}
i,e. the procedure to extend a $N=1$ spectral data to $N=(1,1)$ spectral data satisfies the multiplicativity property?
\end{enumerate}

In the next section we show that answer to Qn.$(1)$ is always affirmative, but answer to Qn.$(2)$ is affirmative w.r.t the Definition
(\ref{product of N=(1,1) data}), but not true w.r.t any other tensor product described in Proposition (\ref{various product of N=(1,1)}). Thus, if
we demand that the extension procedure $\Phi$ is multiplicative then there is a unique choice of tensor product of $N=(1,1)$ spectral data.
\medskip


\section {The Multiplicativity property}

Consider two $N=1$ spectral data $(\mathcal{A}_j,\mathcal{H}_j,D_j,\gamma_j),\,j=1,2$, equipped with real structures $J_j$ such that $\Phi(\mathcal{A}_j,
\mathcal{H}_j,D_j,\gamma_j)$ gives us two honest $N=(1,1)$ spectral data, obtained by the extension procedure. Now, consider two dense Hermitian
f.g.p modules $\mathcal{E}_j\subseteq\mathcal{H}_j$ over $\mathcal{A}_j$, stable under $J_j$ and $\gamma_j$ respectively. Then there are projections
$p_j\in M_{m_j}(\mathcal{A}_j)$ such that as f.g.p modules $\mathcal{E}_j=p_j\mathcal{A}_j^{m_j}$ and the Hermitian structures on them become the
induced canonical structure from the free modules $\mathcal{A}_j^{m_j}$ (Theorem [\ref{structure thm of herm. mod.}]). Clearly, $\mathcal{E}:=
\mathcal{E}_1\otimes\mathcal{E}_2$ is f.g.p module over $\mathcal{A}_1\otimes\mathcal{A}_2$ and has the canonical Hermitian structure induced by the
free module $(\mathcal{A}_1\otimes\mathcal{A}_2)^{m_1m_2}$. Moreover, this Hermitian structure has the following form
\begin{center}
$\langle\xi_1\otimes\eta_1\,,\,\xi_2\otimes\eta_2\rangle_{\mathcal{A}_1\otimes\mathcal{A}_2}=\langle\xi_1,\xi_2\rangle_{\mathcal{A}_1}\langle\eta_1,
\eta_2\rangle_{\mathcal{A}_2}\,,$
\end{center}
which can be easily verified. The real structure on the product of $N=1$ spectral data is given by $J_1\otimes J_2$ (\cite{Con2},\cite{Con3},\cite{Van},\cite{DD}).
Observe that $\mathcal{E}\subseteq\mathcal{H}_1\otimes\mathcal{H}_2$ is dense and stable under $J=J_1\otimes J_2$ and $\gamma=\gamma_1\otimes\gamma_2$.

\begin{lemma}\label{first form}
For the product of two $N=1$ spectral data $(\mathcal{A}_j,\mathcal{H}_j,D_j,\gamma_j),\,j=1,2$, the associated bimodule $\Omega_D^1(\mathcal{A})$
of noncommutative $1$-forms is isomorphic to $\Omega_{D_1}^1(\mathcal{A}_1)\otimes\mathcal{A}_2\bigoplus\mathcal{A}_1\otimes\Omega_{D_2}^1
(\mathcal{A}_2)$ as $\mathcal{A}=\mathcal{A}_1\otimes\mathcal{A}_2$-bimodule, where $D=D_1\otimes 1+\gamma_1\otimes D_2$.
\end{lemma}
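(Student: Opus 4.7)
The plan is to produce an explicit $(\mathcal{A}_1\otimes\mathcal{A}_2)$-bimodule isomorphism
\[
\Psi\colon \bigl(\Omega_{D_1}^1(\mathcal{A}_1)\otimes\mathcal{A}_2\bigr)\oplus\bigl(\mathcal{A}_1\otimes\Omega_{D_2}^1(\mathcal{A}_2)\bigr)\longrightarrow \Omega_D^1(\mathcal{A})
\]
by the formula
\[
\Psi(\omega_1\otimes a_2,\,a_1\otimes\omega_2):=\omega_1\otimes a_2+a_1\gamma_1\otimes\omega_2 .
\]
The appearance of $\gamma_1$ in the second summand is forced by the shape of the product Dirac operator $D=D_1\otimes 1+\gamma_1\otimes D_2$ and is the essential feature of the construction.

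For well-definedness and surjectivity, I would use $[\gamma_1,\mathcal{A}_1]=0$ and the Leibniz rule to compute
\[
[D,b_1\otimes 1]=[D_1,b_1]\otimes 1,\qquad [D,1\otimes b_2]=\gamma_1\otimes[D_2,b_2],
\]
so every term on the right hand side of $\Psi$ lies in $\Omega_D^1(\mathcal{A})$. For surjectivity, a single generator of $\Omega_D^1(\mathcal{A})$ expands as
\[
(a_1\otimes a_2)\,[D,b_1\otimes b_2]=a_1[D_1,b_1]\otimes a_2b_2+a_1b_1\gamma_1\otimes a_2[D_2,b_2],
\]
which is manifestly in the image of $\Psi$. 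The bimodule property is immediate for the left action; for the right action it uses the crucial commutation $\gamma_1 a=a\gamma_1$ for $a\in\mathcal{A}_1$.

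The main obstacle is injectivity. Suppose $\Psi(T_1,T_2)=0$ as an operator on $\mathcal{H}_1\otimes\mathcal{H}_2$, and write $\widetilde T_2$ for the image of $T_2$ under the twist $a\otimes\omega\mapsto a\gamma_1\otimes\omega$. The strategy is to separate the two summands using the $\mathbb{Z}_2$-grading furnished by $\gamma_1\otimes 1$: because $\{\gamma_1,D_1\}=0$ and $[\gamma_1,\mathcal{A}_1]=0$, every one-form $\omega\in\Omega_{D_1}^1(\mathcal{A}_1)$ anticommutes with $\gamma_1$; consequently $T_1$ anticommutes with $\gamma_1\otimes 1$, whereas $\widetilde T_2$ commutes with it (since $a\gamma_1$ commutes with $\gamma_1$). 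Conjugating the relation $T_1+\widetilde T_2=0$ by $\gamma_1\otimes 1$ and taking sums and differences forces $T_1=0=\widetilde T_2$ as operators. Invertibility of $\gamma_1$ then gives $\sum_k a_1^{(k)}\otimes\omega_2^{(k)}=0$ as an operator, and finally the standard injectivity of the embedding $\mathcal{B}(\mathcal{H}_1)\odot\mathcal{B}(\mathcal{H}_2)\hookrightarrow\mathcal{B}(\mathcal{H}_1\otimes\mathcal{H}_2)$ yields $T_1=0$ and $T_2=0$ in the respective algebraic tensor products. The conceptual heart of the argument is the use of $\gamma_1$-parity to split $\Omega_D^1(\mathcal{A})$ as a direct sum, which is also what dictates the $\gamma_1$-twist built into $\Psi$.
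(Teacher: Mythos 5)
Your proposal is correct and follows essentially the same route as the paper: expand $[D,b_1\otimes b_2]$ via the Leibniz rule, absorb the $\gamma_1$-twist into the second summand (using $[\gamma_1,\mathcal{A}_1]=0$ and $\gamma_1^2=1$), and recover each summand from the generators $[D,b_1\otimes 1]$ and $[D,1\otimes b_2]$. The only difference is that you spell out the directness of the sum via the $\gamma_1\otimes 1$-parity argument, a point the paper's proof leaves implicit.
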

\begin{proof}
Observe that $[D,\sum a_1\otimes a_2]=\sum [D_1,a_1]\otimes a_2+\gamma_1a_1\otimes[D_2,a_2]$. Since, $\gamma_1[D_1,a_1]=-[D_1,a_1]\gamma_1$ and
$\gamma_1^2=1$ it follows that
\begin{center}
$\Omega_D^1(\mathcal{A})\subseteq\Omega_{D_1}^1(\mathcal{A}_1)\otimes\mathcal{A}_2\bigoplus\mathcal{A}_1\otimes\Omega_{D_2}^1(\mathcal{A}_2)\,.$
\end{center}
In order to show the equality observe that any element $\sum a_0[D_1,a_1]\otimes a_2\in\Omega_{D_1}^1(\mathcal{A}_1)\otimes\mathcal{A}_2$ can be
written as $\sum (a_0\otimes a_2)[D,a_1\otimes 1]$ and similarly, $\sum a_0\otimes a_1[D_2,a_2]\in\mathcal{A}_1\otimes\Omega_{D_2}^1(\mathcal{A}_2)$
can be written as $\sum (a_0\otimes a_1)[D,1\otimes a_2]$. This proves the equality and one can check that the $\mathcal{A}$-bimodule structure is
preserved.
\end{proof}

\begin{lemma}\label{first differential}
The Dirac dga differential $\,d:\mathcal{A}\longrightarrow\Omega_D^1(\mathcal{A})$ is given by
\begin{center}
$d(a_1\otimes a_2)=(d_1(a_1)\otimes a_2\,,\,a_1\otimes d_2(a_2))$
\end{center}
where $d_j:\mathcal{A}_j\rightarrow\Omega_{D_j}^1(\mathcal{A}_j)$, for $j=1,2$, are the Dirac dga differentials associated with $\mathcal{A}_j$.
\end{lemma}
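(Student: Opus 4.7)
The plan is to compute the commutator $[D,a_1\otimes a_2]$ directly from $D=D_1\otimes 1+\gamma_1\otimes D_2$ and then match the resulting two summands against the isomorphism of Lemma \ref{first form}. First I would observe that, since the second tensor factor of $D_1\otimes 1$ acts as the identity and commutes with $a_2$,
\[
[D_1\otimes 1\,,\,a_1\otimes a_2]\;=\;[D_1,a_1]\otimes a_2\;=\;d_1(a_1)\otimes a_2,
\]
which already lives in the first summand $\Omega_{D_1}^1(\mathcal{A}_1)\otimes\mathcal{A}_2$. Next I would use the fact that $[\gamma_1,a_1]=0$ (as $\gamma_1$ commutes with $\mathcal{A}_1$) to slide $\gamma_1$ across $a_1$, giving
\[
[\gamma_1\otimes D_2\,,\,a_1\otimes a_2]\;=\;\gamma_1 a_1\otimes D_2 a_2-a_1\gamma_1\otimes a_2 D_2\;=\;\gamma_1 a_1\otimes[D_2,a_2].
\]
Adding the two contributions yields $d(a_1\otimes a_2)=d_1(a_1)\otimes a_2+\gamma_1 a_1\otimes d_2(a_2)$.

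The only slightly delicate point is to recognize $\gamma_1 a_1\otimes d_2(a_2)$ as the image of $a_1\otimes d_2(a_2)$ under the embedding of $\mathcal{A}_1\otimes\Omega_{D_2}^1(\mathcal{A}_2)$ into $\Omega_{D}^1(\mathcal{A})$ used in Lemma \ref{first form}. That embedding sends $a_1\otimes[D_2,a_2]$ to $(a_1\otimes 1)[D,1\otimes a_2]$; a one-line computation gives
\[
(a_1\otimes 1)[D,1\otimes a_2]\;=\;(a_1\otimes 1)(\gamma_1\otimes[D_2,a_2])\;=\;a_1\gamma_1\otimes[D_2,a_2]\;=\;\gamma_1 a_1\otimes[D_2,a_2],
\]
the last equality again using $[\gamma_1,a_1]=0$. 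Thus, read through the identification of Lemma \ref{first form}, the second summand of $d(a_1\otimes a_2)$ is precisely $a_1\otimes d_2(a_2)$, establishing the formula claimed.

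I do not anticipate any genuine obstacle: the entire argument is a direct expansion plus careful bookkeeping of the $\gamma_1$-twist built into the direct-sum decomposition of $\Omega_D^1(\mathcal{A})$. The only thing one must not forget is that the isomorphism in Lemma \ref{first form} is not the naive inclusion of tensor products as subspaces of $\mathcal{B}(\mathcal{H}_1\otimes\mathcal{H}_2)$, but incorporates the grading $\gamma_1$ on the second summand; once this is acknowledged, the formula $d(a_1\otimes a_2)=(d_1(a_1)\otimes a_2,\,a_1\otimes d_2(a_2))$ is immediate.
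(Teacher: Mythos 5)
Your proof is correct and takes essentially the same route as the paper: the paper's proof of this lemma simply cites Lemma~\ref{first form}, whose own proof already contains your computation $[D,a_1\otimes a_2]=[D_1,a_1]\otimes a_2+\gamma_1 a_1\otimes[D_2,a_2]$ together with the identification of $\gamma_1 a_1\otimes[D_2,a_2]$ with $(a_1\otimes 1)[D,1\otimes a_2]$ under the $\gamma_1$-twisted embedding of $\mathcal{A}_1\otimes\Omega_{D_2}^1(\mathcal{A}_2)$. You have merely spelled out explicitly the bookkeeping that the paper leaves implicit.
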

\begin{proof}
Follows from the previous Lemma (\ref{first form}). 
\end{proof}

\begin{lemma}\label{lemma 3}
For $\mathcal{A}=\mathcal{A}_1\otimes\mathcal{A}_2$, we have $\mathcal{E}\otimes_\mathcal{A}\mathcal{E}\cong(\mathcal{E}_1\otimes_{\mathcal{A}_1}
\mathcal{E}_1)\otimes_\mathbb{C}(\mathcal{E}_2\otimes_{\mathcal{A}_2}\mathcal{E}_2)$ as $\mathcal{A}$-bimodule.
\end{lemma}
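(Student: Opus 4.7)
The plan is to write down the obvious $\mathbb{C}$-linear maps in both directions on elementary tensors, verify well-definedness against the various balancing relations, and finally confirm that the two maps are mutually inverse and intertwine the $\mathcal{A}$-bimodule structures.

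First I would record the $\mathcal{A}$-bimodule structure on $\mathcal{E}=\mathcal{E}_1\otimes\mathcal{E}_2$ explicitly. The left action is componentwise by definition of the product representation of $\mathcal{A}=\mathcal{A}_1\otimes\mathcal{A}_2$. For the right action, since the real structure on the product $N=1$ data is $J=J_1\otimes J_2$, one computes
\[
(\xi_1\otimes\xi_2)\cdot(a_1\otimes a_2)=J(a_1\otimes a_2)^*J^*(\xi_1\otimes\xi_2)=(\xi_1\cdot a_1)\otimes(\xi_2\cdot a_2),
\]
so the right action also splits componentwise on elementary tensors.

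Next I would define
\[
\Phi:(\mathcal{E}_1\otimes_{\mathcal{A}_1}\mathcal{E}_1)\otimes_\mathbb{C}(\mathcal{E}_2\otimes_{\mathcal{A}_2}\mathcal{E}_2)\longrightarrow\mathcal{E}\otimes_\mathcal{A}\mathcal{E}
\]
on elementary tensors by $(\xi_1\otimes_{\mathcal{A}_1}\eta_1)\otimes(\xi_2\otimes_{\mathcal{A}_2}\eta_2)\mapsto(\xi_1\otimes\xi_2)\otimes_\mathcal{A}(\eta_1\otimes\eta_2)$, and the candidate inverse $\Psi$ by the reverse prescription. Well-definedness of $\Phi$ across each $\mathcal{A}_j$-balancing follows from the identity
\[
((\xi_1\cdot a_1)\otimes\xi_2)\otimes_\mathcal{A}(\eta_1\otimes\eta_2)=((\xi_1\otimes\xi_2)\cdot(a_1\otimes 1))\otimes_\mathcal{A}(\eta_1\otimes\eta_2)=(\xi_1\otimes\xi_2)\otimes_\mathcal{A}((a_1\eta_1)\otimes\eta_2),
\]
with the analogous computation for $a_2\in\mathcal{A}_2$; here the first equality uses the componentwise right action just recorded and the second uses the $\mathcal{A}$-balancing on the target. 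For $\Psi$ I would observe that, since $\mathcal{A}_1\otimes\mathcal{A}_2$ is algebraically spanned by elementary tensors $a_1\otimes a_2$, checking the $\mathcal{A}$-balancing reduces to checking it against such elementary tensors, and it then splits cleanly into the two separate $\mathcal{A}_j$-balancings available in each $\mathcal{E}_j\otimes_{\mathcal{A}_j}\mathcal{E}_j$.

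The mutual inverse property is tautological on elementary tensors, and the bimodule compatibility is automatic because on both sides the left action lives on the ``first'' factors and the right action on the ``second'' factors, in each case componentwise. The only mild obstacle is the reduction of $\mathcal{A}$-balancing to $\mathcal{A}_j$-balancings in the well-definedness of $\Psi$; this is a purely algebraic point that works because the algebraic tensor product $\mathcal{A}_1\otimes\mathcal{A}_2$ is literally spanned by elementary tensors, so no completion or topological subtlety intervenes.
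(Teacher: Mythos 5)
Your proposal is correct and matches the paper's approach: the paper simply invokes the canonical isomorphism $(\mathcal{E}_1\otimes\mathcal{E}_2)\otimes_{\mathcal{A}_1\otimes\mathcal{A}_2}(\mathcal{E}_1\otimes\mathcal{E}_2)\cong(\mathcal{E}_1\otimes_{\mathcal{A}_1}\mathcal{E}_1)\otimes_\mathbb{C}(\mathcal{E}_2\otimes_{\mathcal{A}_2}\mathcal{E}_2)$ for unital algebras, and you have spelled out exactly the maps and balancing checks that make it canonical, including the correct observation that the right action via $J=J_1\otimes J_2$ splits componentwise.
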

\begin{proof}
Since $\mathcal{A}_1,\mathcal{A}_2$ both are unital algebras, there is a canonical isomorphism of $\mathcal{A}$-bimodule.
\end{proof}

\begin{lemma}\label{lemma 4}
For $\mathcal{A}=\mathcal{A}_1\otimes\mathcal{A}_2$,
\begin{enumerate}
\item $\left(\Omega_{D_1}^1(\mathcal{A}_1)\otimes\mathcal{A}_2\right)\otimes_\mathcal{A}(\mathcal{E}_1\otimes\mathcal{E}_2)\cong
\left(\Omega_{D_1}^1(\mathcal{A}_1)\otimes_{\mathcal{A}_1}\mathcal{E}_1\right)\otimes_\mathbb{C}\mathcal{E}_2$ as $\mathcal{A}$-bimodule.
\item $(\mathcal{E}_1\otimes\mathcal{E}_2)\otimes_\mathcal{A}\left(\Omega_{D_1}^1(\mathcal{A}_1)\otimes\mathcal{A}_2\right)\cong
\left(\mathcal{E}_1\otimes_{\mathcal{A}_1}\Omega_{D_1}^1(\mathcal{A}_1)\right)\otimes_\mathbb{C}\mathcal{E}_2$ as $\mathcal{A}$-bimodule.
\item $\left(\mathcal{A}_1\otimes\Omega_{D_2}^1(\mathcal{A}_2)\right)\otimes_\mathcal{A}(\mathcal{E}_1\otimes\mathcal{E}_2)\cong
\mathcal{E}_1\otimes_\mathbb{C}\left(\Omega_{D_2}^1(\mathcal{A}_2)\otimes_{\mathcal{A}_2}\mathcal{E}_2\right)$ as $\mathcal{A}$-bimodule.
\item $(\mathcal{E}_1\otimes\mathcal{E}_2)\otimes_\mathcal{A}\left(\mathcal{A}_1\otimes\Omega_{D_2}^1(\mathcal{A}_2)\right)\cong
\mathcal{E}_1\otimes_\mathbb{C}\left(\mathcal{E}_2\otimes_{\mathcal{A}_2}\Omega_{D_2}^1(\mathcal{A}_2)\right)$ as $\mathcal{A}$-bimodule.
\end{enumerate}
\end{lemma}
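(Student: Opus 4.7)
The plan is to prove each of the four isomorphisms by the same pattern used in Lemma \ref{lemma 3}: exhibit an explicit map, check it descends to the balanced tensor product, construct the inverse, and verify the bimodule structure is preserved. Crucially, unitality of $\mathcal{A}_1$ and $\mathcal{A}_2$ makes $1\otimes\mathcal{A}_2$ and $\mathcal{A}_1\otimes 1$ into commuting subalgebras that allow the tensor factors to ``slide past each other.'' Throughout, I use the $\mathcal{A}$-bimodule structure on $\Omega_{D_1}^1(\mathcal{A}_1)\otimes\mathcal{A}_2$ identified in the proof of Lemma \ref{first form}, namely $(a\otimes b)(\omega\otimes c)(a'\otimes b')=(a\omega a')\otimes(bcb')$, and symmetrically for $\mathcal{A}_1\otimes\Omega_{D_2}^1(\mathcal{A}_2)$.

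For part (1) I would define the candidate isomorphism
\begin{center}
$\phi:(\Omega_{D_1}^1(\mathcal{A}_1)\otimes\mathcal{A}_2)\otimes_\mathcal{A}(\mathcal{E}_1\otimes\mathcal{E}_2)\longrightarrow(\Omega_{D_1}^1(\mathcal{A}_1)\otimes_{\mathcal{A}_1}\mathcal{E}_1)\otimes_\mathbb{C}\mathcal{E}_2$
\end{center}
on elementary tensors by $\phi\bigl((\omega\otimes a_2)\otimes_\mathcal{A}(\xi_1\otimes\xi_2)\bigr):=(\omega\otimes_{\mathcal{A}_1}\xi_1)\otimes_\mathbb{C}(a_2\xi_2)$, and the candidate inverse
\begin{center}
$\psi\bigl((\omega\otimes_{\mathcal{A}_1}\xi_1)\otimes_\mathbb{C}\xi_2\bigr):=(\omega\otimes 1)\otimes_\mathcal{A}(\xi_1\otimes\xi_2)$.
\end{center}
The key verifications are: (a) $\phi$ is $\mathcal{A}$-balanced, which reduces to checking that for $x\otimes y\in\mathcal{A}_1\otimes\mathcal{A}_2$ one has $(\omega x\otimes a_2 y)\otimes(\xi_1\otimes\xi_2)$ and $(\omega\otimes a_2)\otimes(x\xi_1\otimes y\xi_2)$ giving the same image, which uses the $\mathcal{A}_1$-balancing of $\otimes_{\mathcal{A}_1}$ on the first factor and the bilinearity in the second factor over $\mathbb{C}$; (b) $\psi$ is $\mathcal{A}_1$-balanced on the $\Omega_{D_1}^1(\mathcal{A}_1)\otimes_{\mathcal{A}_1}\mathcal{E}_1$ factor, which follows since $(\omega x\otimes 1)\otimes_\mathcal{A}(\xi_1\otimes\xi_2)=(\omega\otimes 1)(x\otimes 1)\otimes_\mathcal{A}(\xi_1\otimes\xi_2)=(\omega\otimes 1)\otimes_\mathcal{A}(x\xi_1\otimes\xi_2)$; and (c) $\phi\circ\psi=\mathrm{id}$ is immediate while $\psi\circ\phi=\mathrm{id}$ requires the rewriting $(\omega\otimes a_2)\otimes_\mathcal{A}(\xi_1\otimes\xi_2)=(\omega\otimes 1)(1\otimes a_2)\otimes_\mathcal{A}(\xi_1\otimes\xi_2)=(\omega\otimes 1)\otimes_\mathcal{A}(\xi_1\otimes a_2\xi_2)$, which is exactly the point where unitality is used. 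Bimodule equivariance is then a direct check on generators.

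Parts (2), (3), (4) follow by the same template with the obvious notational changes: in (2) swap left and right factors and use the right $\mathcal{A}_1$-balancing of $\mathcal{E}_1\otimes_{\mathcal{A}_1}\Omega_{D_1}^1(\mathcal{A}_1)$; parts (3) and (4) are the mirror images of (1) and (2) obtained by swapping the indices $1\leftrightarrow 2$ and using $\mathcal{A}_1\otimes\Omega_{D_2}^1(\mathcal{A}_2)$ in place of $\Omega_{D_1}^1(\mathcal{A}_1)\otimes\mathcal{A}_2$. In each case the defining map takes the ``scalar'' factor ($\mathcal{A}_1$ or $\mathcal{A}_2$) and pushes it onto the appropriate $\mathcal{E}_j$, with unitality of the opposite algebra providing the inverse.

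I do not anticipate a genuine obstacle: the entire content is the unital version of the standard identity $(M\otimes_B N)\otimes_{B\otimes C}(P\otimes Q)\cong(M\otimes_B P)\otimes_\mathbb{C}(N\otimes_C Q)$ when $B,C$ are unital. The only item requiring mild care is confirming that the $\mathcal{A}$-bimodule structure on $\Omega_{D_1}^1(\mathcal{A}_1)\otimes\mathcal{A}_2$ is the naive tensor product structure (no twist from $\gamma_1$), which was already established within the proof of Lemma \ref{first form} by computing $(a_0\otimes a_2)[D,a_1\otimes 1]$ as an operator on $\mathcal{H}_1\otimes\mathcal{H}_2$.
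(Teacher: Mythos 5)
Your proposal is correct and takes essentially the same route as the paper: the paper's proof is a one-line appeal to these being the canonical isomorphisms arising from unitality of $\mathcal{A}_1,\mathcal{A}_2$ and $\mathcal{E}_j\otimes_{\mathcal{A}_j}\mathcal{A}_j\cong\mathcal{E}_j$, and your explicit maps, balancing checks, and inverses are exactly those canonical isomorphisms written out in detail. Your side remark that the $\mathcal{A}$-bimodule structure on $\Omega_{D_1}^1(\mathcal{A}_1)\otimes\mathcal{A}_2$ (and its mirror) carries no $\gamma_1$-twist is also consistent with the identification made in the proof of Lemma \ref{first form}.
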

\begin{proof}
These are canonical isomorphisms since both $\mathcal{A}_1,\mathcal{A}_2$ are unital algebras, and $\mathcal{E}_j\otimes_{\mathcal{A}_j}\mathcal{A}_j
\cong\mathcal{E}_j$ for $j=1,2$.
\end{proof}

\textbf{Notation:} Throughout this section $\,\mathcal{A}:=\mathcal{A}_1\otimes\mathcal{A}_2\,,\,\mathcal{E}:=\mathcal{E}_1\otimes\mathcal{E}_2\,,\,D=
D_1\otimes 1+\gamma_1\otimes D_2\,,\,\gamma=\gamma_1\otimes\gamma_2$ and $J=J_1\otimes J_2$.
\medskip

We will use Lemmas (\ref{first form}\,,\,\ref{first differential}\,,\,\ref{lemma 3}\,,\,\ref{lemma 4}) frequently in several places in this section
without any further mention. Now, for two compatible connections $\nabla_1\in C(\mathcal{E}_1)$ and $\nabla_2\in C(\mathcal{E}_2)$ define
\begin{align*}
\nabla:\mathcal{E}_1\otimes\mathcal{E}_2 &\longrightarrow \Omega_D^1(\mathcal{A})\otimes_\mathcal{A}(\mathcal{E}_1\otimes\mathcal{E}_2)\\
e_1\otimes e_2 &\longmapsto \nabla_1(e_1)\otimes e_2+e_1\otimes\nabla_2(e_2)
\end{align*}

\begin{proposition}\label{left product connection}
$\nabla\in C(\mathcal{E}_1\otimes\mathcal{E}_2)$, i,e. if $\nabla_1,\nabla_2$ are compatible connections on $\mathcal{E}_1$ and $\mathcal{E}_2$
respectively then so is $\nabla$ on $\mathcal{E}=\mathcal{E}_1\otimes\mathcal{E}_2$. Moreover, $\nabla(\gamma(e_1\otimes e_2))=(1\otimes\gamma)
\nabla(e_1\otimes e_2)$, i,e. $\nabla$ commutes with $\gamma$ on $\mathcal{E}_1\otimes\mathcal{E}_2$.
\end{proposition}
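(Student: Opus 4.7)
The plan is to verify the three defining conditions of a compatible connection commuting with the grading, working piece by piece on elementary tensors $\xi = e_1 \otimes e_2$ and $a = a_1 \otimes a_2$, and then extending by linearity. Throughout I freely use the canonical identifications from Lemmas \ref{first form}, \ref{first differential}, \ref{lemma 3}, \ref{lemma 4} so that the summands $\nabla_1(e_1) \otimes e_2$ and $e_1 \otimes \nabla_2(e_2)$ are viewed inside the single module $\Omega_D^1(\mathcal{A}) \otimes_\mathcal{A} (\mathcal{E}_1 \otimes \mathcal{E}_2)$, which by Lemma \ref{first form} decomposes as $(\Omega_{D_1}^1(\mathcal{A}_1) \otimes_{\mathcal{A}_1} \mathcal{E}_1) \otimes \mathcal{E}_2 \,\oplus\, \mathcal{E}_1 \otimes (\Omega_{D_2}^1(\mathcal{A}_2) \otimes_{\mathcal{A}_2} \mathcal{E}_2)$.

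For the Leibniz rule (a), I expand $\nabla((a_1 \otimes a_2)(e_1 \otimes e_2)) = \nabla_1(a_1 e_1) \otimes a_2 e_2 + a_1 e_1 \otimes \nabla_2(a_2 e_2)$ and apply the Leibniz rule for each $\nabla_j$. This produces four terms. Two are of the form $a_j \nabla_j(e_j)$, and using the $\mathcal{A}$-bimodule structure on $\Omega_D^1(\mathcal{A}) \otimes_\mathcal{A} \mathcal{E}$ inherited via the identifications of Lemma \ref{lemma 4}, they assemble to $(a_1 \otimes a_2)\nabla(e_1 \otimes e_2)$. The remaining two are $(d_1 a_1 \otimes e_1) \otimes a_2 e_2$ and $a_1 e_1 \otimes (d_2 a_2 \otimes e_2)$, which under the same identifications equal $(d_1 a_1 \otimes a_2) \otimes_\mathcal{A} (e_1 \otimes e_2)$ and $(a_1 \otimes d_2 a_2) \otimes_\mathcal{A} (e_1 \otimes e_2)$; by the formula $d(a_1 \otimes a_2) = (d_1 a_1 \otimes a_2,\, a_1 \otimes d_2 a_2)$ from Lemma \ref{first differential} they combine to $d(a_1 \otimes a_2) \otimes (e_1 \otimes e_2)$, as required.

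For Hermitian compatibility (b), I use the tensor product form of the Hermitian structure, $\langle e_1 \otimes e_2, f_1 \otimes f_2 \rangle_\mathcal{A} = \langle e_1, f_1 \rangle_{\mathcal{A}_1} \otimes \langle e_2, f_2 \rangle_{\mathcal{A}_2}$, together with the convention for pairing a $1$-form-valued element with a module element. Expanding $\langle \nabla\xi, \eta \rangle - \langle \xi, \nabla \eta \rangle$ splits into two sums, one living in $\Omega_{D_1}^1(\mathcal{A}_1) \otimes \mathcal{A}_2$ and one in $\mathcal{A}_1 \otimes \Omega_{D_2}^1(\mathcal{A}_2)$; each takes the form $\bigl(\langle \nabla_j e_j, f_j \rangle - \langle e_j, \nabla_j f_j \rangle\bigr)$ tensored with the other $\mathcal{A}_k$-valued inner product. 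Invoking (b) for each $\nabla_j$ converts these differences into $d_j \langle e_j, f_j \rangle_{\mathcal{A}_j}$, and summing reproduces $d \langle \xi, \eta \rangle_\mathcal{A}$ via Lemma \ref{first differential}.

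For commutation with the grading, I compute $\nabla(\gamma(e_1 \otimes e_2)) = \nabla_1(\gamma_1 e_1) \otimes \gamma_2 e_2 + \gamma_1 e_1 \otimes \nabla_2(\gamma_2 e_2)$, apply $\nabla_j \gamma_j = (1 \otimes \gamma_j)\nabla_j$ on each factor, and read off the result as $(1 \otimes \gamma_1 \otimes \gamma_2)\bigl(\nabla_1(e_1) \otimes e_2 + e_1 \otimes \nabla_2(e_2)\bigr) = (1 \otimes \gamma)\nabla(e_1 \otimes e_2)$; here it is essential that $\gamma_j$ acts trivially on the $\Omega_{D_k}^1(\mathcal{A}_k)$ slots of the other factor, which is automatic from the bimodule identifications. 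The only genuine obstacle is the bookkeeping across these identifications of $\Omega_D^1(\mathcal{A}) \otimes_\mathcal{A} (\mathcal{E}_1 \otimes \mathcal{E}_2)$; once performed carefully, all three conditions reduce to the known compatibilities of $\nabla_1$ and $\nabla_2$.
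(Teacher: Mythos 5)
Your proposal is correct and follows essentially the same route as the paper's own proof: expand on elementary tensors, use the decomposition of $\Omega_D^1(\mathcal{A})$ from Lemma (\ref{first form}) and the product formula for $d$ from Lemma (\ref{first differential}) to get the Leibniz rule, reduce the Hermitian compatibility to the identities $\langle\nabla_j e_j,f_j\rangle-\langle e_j,\nabla_j f_j\rangle=d_j\langle e_j,f_j\rangle$ tensored with the other factor's inner product, and check the grading commutation directly. The only difference is presentational: the paper writes out the compatibility computation explicitly in terms of $\nabla_j(e_j)=\sum_i\omega_{ji}\otimes e_{ji}$ and the convention $\langle\xi,\nabla\eta\rangle=\sum\langle\xi,\eta_j\rangle_\mathcal{A}\,\omega_j^*$, whereas you describe the same splitting at a higher level.
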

\begin{proof}
Clearly $\nabla$ is a $\mathbb{C}$-linear map. Now, for $e_1\otimes e_2\in\mathcal{E}$ and $x\otimes y\in\mathcal{A}$
\begin{eqnarray*}
\nabla((x\otimes y)(e_1\otimes e_2)) & = & \nabla_1(xe_1)\otimes ye_2+xe_1\otimes\nabla_2(ye_2)\\
& = & x\nabla_1(e_1)\otimes ye_2+(d_1x\otimes e_1)\otimes ye_2+xe_1\otimes y\nabla_2(e_2)+xe_1\otimes(d_2y\otimes e_2)\\
& = & (x\nabla_1(e_1)\otimes ye_2+xe_1\otimes y\nabla_2(e_2))+(d_1x\otimes 1\otimes e_1\otimes ye_2+1\otimes d_2y\otimes xe_1\otimes e_2)\\
& = & (x\otimes y)(\nabla_1(e_1)\otimes e_2+e_1\otimes\nabla_2(e_2))+(d_1x\otimes y+x\otimes d_2y)\otimes(e_1\otimes e_2)\\
& = & (x\otimes y)\nabla(e_1\otimes e_2)+d(x\otimes y)\otimes(e_1\otimes e_2)
\end{eqnarray*}
by Lemma (\ref{first differential}). Hence, $\nabla$ is a connection on $\mathcal{E}=\mathcal{E}_1\otimes\mathcal{E}_2$. Now to check the
compatibility of $\nabla$ with respect to the Hermitian structure on $\mathcal{E}$, write
\begin{align*}
\nabla_j(e_j) &= \sum_i\,\omega_{ji}\otimes e_{ji}\,\,\in\Omega_{D_j}^1(\mathcal{A}_j)\otimes\mathcal{E}_j\\
\nabla_j(e_j^\prime) &= \sum_i\,\omega_{ji}^\prime\otimes e_{ji}^\prime\in\Omega_{D_j}^1(\mathcal{A}_j)\otimes\mathcal{E}_j\\
\end{align*}
for $j=1,2$. Then
\begin{align*}
\nabla_1(e_1)\otimes e_2+e_1\otimes\nabla_2(e_2) &= \sum_i\,(\omega_{1i}\otimes e_{1i}\otimes e_2\,,\,\omega_{2i}\otimes e_1\otimes e_{2i})\,,\\
\nabla_1(e_1^\prime)\otimes e_2^\prime+e_1^\prime\otimes\nabla_2(e_2^\prime) &= \sum_i\,(\omega_{1i}^\prime\otimes e_{1i}^\prime\otimes e_2^\prime
\,,\,\omega_{2i}^\prime\otimes e_1^\prime\otimes e_{2i}^\prime)\,,
\end{align*}
and
\begin{eqnarray}\label{to be used in last line}
d\langle e_1\otimes e_2\,,\,e_1^\prime\otimes e_2^\prime\rangle & = & d(\langle e_1,e_1^\prime\rangle\otimes\langle e_2,e_2^\prime\rangle)\\
& = & d_1(\langle e_1,e_1^\prime\rangle)\otimes\langle e_2,e_2^\prime\rangle+\langle e_1,e_1^\prime\rangle\otimes d_2(\langle e_2,e_2^\prime\rangle)\,.\nonumber
\end{eqnarray}
Since, $\nabla_1\in C(\mathcal{E}_1)$ and $\nabla_2\in C(\mathcal{E}_2)$ we have
\begin{center}
$-\langle e_j,\nabla_je_j^\prime\rangle+\langle\nabla_je_j,e_j^\prime\rangle=d_j(\langle e_j,e_j^\prime\rangle)$
\end{center}
for $j=1,2$, which further implies the following
\begin{eqnarray}\label{individual compatibility}
\sum_i-\langle e_j,e_{ji}^\prime\rangle(\omega_{ji}^\prime)^*+\omega_{ji}\langle e_{ji},e_j^\prime\rangle=d_j(\langle e_j,e_j^\prime\rangle)\,.
\end{eqnarray}
for $j=1,2$. Now,
\begin{eqnarray*}
&  & \langle e_1\otimes e_2\,,\,\nabla(e_1^\prime\otimes e_2^\prime)\rangle\\
& = & \langle e_1\otimes e_2\,,\,\sum_i\omega_{1i}^\prime\otimes e_{1i}^\prime\otimes e_2^\prime+\omega_{2i}^\prime\otimes e_1^\prime\otimes e_{2i}^\prime\rangle\\
& = & \sum_{i,j}\langle e_1\otimes e_2\,,\,a_{01ij}^\prime[D_1,a_{11ij}^\prime]\otimes e_{1i}^\prime\otimes e_2^\prime+a_{02ij}^\prime[D_2,a_{12ij}^\prime]
\otimes e_1^\prime\otimes e_{2i}^\prime\rangle\\
& = & \sum_{i,j}\langle e_1\otimes e_2\,,\,((a_{01ij}^\prime\otimes 1)[D,a_{11ij}^\prime\otimes 1])\otimes e_{1i}^\prime\otimes e_2^\prime+
((1\otimes a_{02ij}^\prime)[D,1\otimes a_{12ij}^\prime])\otimes e_1^\prime\otimes e_{2i}^\prime\rangle\\
& = & \sum_{i,j}(\langle e_1,e_{1i}^\prime\rangle\otimes\langle e_2,e_2^\prime\rangle)((a_{01ij}^\prime\otimes 1)[D,a_{11ij}^\prime\otimes 1])^*
+\sum_{i,j}(\langle e_1,e_1^\prime\rangle\otimes\langle e_2,e_{2i}^\prime\rangle)((1\otimes a_{02ij}^\prime)[D,1\otimes a_{12ij}^\prime])^*\\
& = & \sum_{i,j}\langle e_1,e_{1i}^\prime\rangle[D_1,a_{11ij}^\prime]^*(a_{01ij}^\prime)^*\otimes\langle e_2,e_2^\prime\rangle+\langle e_1,e_1^\prime
\rangle\otimes\langle e_2,e_{2i}^\prime\rangle[D_2,a_{12ij}^\prime]^*(a_{02ij}^\prime)^*\\
& = & \sum_i\langle e_1,e_{1i}^\prime\rangle(\omega_{1i}^\prime)^*\otimes\langle e_2,e_2^\prime\rangle+\langle e_1,e_1^\prime\rangle\otimes
\langle e_2,e_{2i}^\prime\rangle(\omega_{2i}^\prime)^*\,.
\end{eqnarray*}
Similarly, one can show that
\begin{center}
$\langle\nabla(e_1\otimes e_2)\,,\,e_1^\prime\otimes e_2^\prime\rangle=\sum_i\omega_{1i}\langle e_{1i},e_1^\prime\rangle\otimes\langle e_2,
e_2^\prime\rangle+\langle e_1,e_1^\prime\rangle\otimes\omega_{2i}\langle e_{2i},e_2^\prime\rangle\,.$
\end{center}
Subtracting we get
\begin{center}
$\langle\nabla(e_1\otimes e_2)\,,\,e_1^\prime\otimes e_2^\prime\rangle-\langle e_1\otimes e_2\,,\,\nabla(e_1^\prime\otimes e_2^\prime)\rangle=
d(\langle e_1\otimes e_2\,,\,e_1^\prime\otimes e_2^\prime\rangle)$
\end{center}
by equation (\ref{to be used in last line}) and (\ref{individual compatibility}). This proves that $\nabla$ is a compatible connection, i,e.
$\nabla\in C(\mathcal{E})$. That $\nabla$ commutes with $\gamma$ on $\mathcal{E}_1\otimes\mathcal{E}_2$ is easy to verify.
\end{proof}

\begin{lemma}\label{flip operator}
The flip operator $\Psi:\Omega_D^1(\mathcal{A})\otimes_\mathcal{A}\mathcal{E}\longrightarrow\mathcal{E}\otimes_\mathcal{A}\Omega_D^1(\mathcal{A})$
is given by $\Psi_1\otimes J_2\bigoplus J_1\otimes\Psi_2$.
\end{lemma}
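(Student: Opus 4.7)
The plan is to combine the two direct-sum decompositions provided by Lemmas \ref{first form} and \ref{lemma 4} and check the formula for $\Psi$ on each of the two resulting summands separately.

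More precisely, Lemma \ref{first form} identifies $\Omega_D^1(\mathcal{A})$ with $\Omega_{D_1}^1(\mathcal{A}_1)\otimes\mathcal{A}_2\bigoplus\mathcal{A}_1\otimes\Omega_{D_2}^1(\mathcal{A}_2)$. Tensoring on the right with $\mathcal{E}=\mathcal{E}_1\otimes\mathcal{E}_2$ over $\mathcal{A}$ and invoking parts $(1)$ and $(3)$ of Lemma \ref{lemma 4}, the domain of $\Psi$ splits as
\begin{center}
$\Omega_D^1(\mathcal{A})\otimes_\mathcal{A}\mathcal{E}\;\cong\;\bigl(\Omega_{D_1}^1(\mathcal{A}_1)\otimes_{\mathcal{A}_1}\mathcal{E}_1\bigr)\otimes\mathcal{E}_2\;\bigoplus\;\mathcal{E}_1\otimes\bigl(\Omega_{D_2}^1(\mathcal{A}_2)\otimes_{\mathcal{A}_2}\mathcal{E}_2\bigr).$
\end{center}
Symmetrically, parts $(2)$ and $(4)$ of Lemma \ref{lemma 4} give the analogous splitting of the codomain $\mathcal{E}\otimes_\mathcal{A}\Omega_D^1(\mathcal{A})$. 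Under these identifications, the map $\Psi_1\otimes J_2\oplus J_1\otimes\Psi_2$ has an unambiguous meaning: $\Psi_1\otimes J_2$ sends the first summand of the domain to the first summand of the codomain, and $J_1\otimes\Psi_2$ sends the second to the second.

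Next I would evaluate the defining formula $\Psi(\omega\otimes\xi)=J\xi\otimes\omega^*$ on an element of the first summand. Any such element corresponds to $(\omega_1\otimes 1)\otimes(\xi_1\otimes\xi_2)$ with $\omega_1\in\Omega_{D_1}^1(\mathcal{A}_1)$. Using $J=J_1\otimes J_2$ and $(\omega_1\otimes 1)^*=\omega_1^*\otimes 1$, this gives
\begin{center}
$\Psi\bigl((\omega_1\otimes 1)\otimes(\xi_1\otimes\xi_2)\bigr)=(J_1\xi_1\otimes J_2\xi_2)\otimes(\omega_1^*\otimes 1),$
\end{center}
which under the codomain identification equals $(J_1\xi_1\otimes\omega_1^*)\otimes J_2\xi_2=\Psi_1(\omega_1\otimes\xi_1)\otimes J_2\xi_2$. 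An identical computation on the second summand, using $(1\otimes\omega_2)^*=1\otimes\omega_2^*$, yields $J_1\xi_1\otimes\Psi_2(\omega_2\otimes\xi_2)$. Thus $\Psi$ restricts to $\Psi_1\otimes J_2$ on the first summand and to $J_1\otimes\Psi_2$ on the second, which is precisely the claimed decomposition.

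No real obstacle is anticipated here; the proof is essentially a careful bookkeeping of the bimodule identifications. The only point requiring a bit of attention is that the two summands of $\Omega_D^1(\mathcal{A})\otimes_\mathcal{A}\mathcal{E}$ do not mix under $\Psi$, since the $*$-operation preserves the decomposition $\omega_1\otimes 1\mapsto\omega_1^*\otimes 1$ and $1\otimes\omega_2\mapsto 1\otimes\omega_2^*$, and since $J=J_1\otimes J_2$ factors as a simple tensor through the $\mathcal{E}_1\otimes\mathcal{E}_2$ side. Well-definedness on the balanced tensor products follows from the already noted identity $\Psi(as)=\Psi(s)a^*$ applied separately to $\Psi_1$ and $\Psi_2$.
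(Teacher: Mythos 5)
Your proposal is correct and follows essentially the same route as the paper: both decompose $\Omega_D^1(\mathcal{A})\otimes_\mathcal{A}\mathcal{E}$ via Lemmas \ref{first form} and \ref{lemma 4} and evaluate $\omega\otimes\xi\mapsto J\xi\otimes\omega^*$ on each summand, the only cosmetic difference being that you normalize the $\mathcal{A}_2$ (resp. $\mathcal{A}_1$) factor of the $1$-form to $1$ by moving it across the balanced tensor product, whereas the paper carries a general $a_2$ along and invokes the identity $J(\xi)\,.\,a^*=J(a\xi)$ at the end. No gap.
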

\begin{proof}
Let $\xi=\xi_1\otimes\xi_2\in\mathcal{E}=\mathcal{E}_1\otimes\mathcal{E}_2$ and $\omega=(\omega_1\otimes a_2,a_1\otimes\omega_2)\in\Omega_D^1
(\mathcal{A})$. Then,
\begin{center}
$\omega\otimes\xi\,=\,(\omega_1\otimes\xi_1\otimes a_2\xi_2\,,\,a_1\xi_1\otimes\omega_2\otimes\xi_2)\,.$
\end{center}
Hence,
\begin{eqnarray*}
J\xi\otimes\omega^* & = & J(\xi_1\otimes\xi_2)\otimes(\omega_1^*\otimes a_2^*\,,\,a_1^*\otimes\omega_2^*)\\
& = & (J_1\xi_1\otimes J_2\xi_2)\otimes(\omega_1^*\otimes a_2^*\,,\,a_1^*\otimes\omega_2^*)\\
& = & (J_1\xi_1\otimes\omega_1^*\otimes(J_2\xi_2)a_2^*\,,\,(J_1\xi_1)a_1^*\otimes J_2\xi_2\otimes\omega_2^*)
\end{eqnarray*}
Now, observe that
\begin{eqnarray*}
J(\xi)\,.\,a^* & = & JaJ^*J(\xi)\\
& = & Ja(\xi)\\
& = & J(a\xi)\,.
\end{eqnarray*}
Since, by definition
\begin{align*}
\Psi:\Omega_D^1(\mathcal{A})\otimes_\mathcal{A}\mathcal{E} &\longrightarrow \mathcal{E}\otimes_\mathcal{A}\Omega_D^1(\mathcal{A})\\
\omega\otimes\xi &\longmapsto J\xi\otimes\omega^*
\end{align*}
we see that $\Psi=\Psi_1\otimes J_2\bigoplus J_1\otimes\Psi_2\,$.
\end{proof}

\begin{lemma}\label{right product connection}
The right connection $\overline{\nabla}:\mathcal{E}\longrightarrow\mathcal{E}\otimes_\mathcal{A}\Omega_D^1(\mathcal{A})$ is given by
$\overline{\nabla}_1\otimes 1+1\otimes\overline{\nabla}_2$.
\end{lemma}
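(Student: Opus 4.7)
The plan is to unwind the definition $\overline{\nabla}(\xi) := -\Psi(\nabla J^{*}\xi)$ on pure tensors $\xi = \xi_1 \otimes \xi_2 \in \mathcal{E}_1\otimes\mathcal{E}_2$, using each of the product decompositions already established. Since $J = J_1 \otimes J_2$ is an anti-unitary, $J^{*}(\xi_1\otimes\xi_2) = J_1^{*}\xi_1 \otimes J_2^{*}\xi_2$, and by Proposition \ref{left product connection},
\begin{equation*}
\nabla(J^{*}\xi) \;=\; \nabla_1(J_1^{*}\xi_1)\otimes J_2^{*}\xi_2 \;+\; J_1^{*}\xi_1 \otimes \nabla_2(J_2^{*}\xi_2),
\end{equation*}
which sits in $\bigl(\Omega_{D_1}^1(\mathcal{A}_1)\otimes_{\mathcal{A}_1}\mathcal{E}_1\bigr)\otimes \mathcal{E}_2 \;\oplus\; \mathcal{E}_1 \otimes \bigl(\Omega_{D_2}^1(\mathcal{A}_2)\otimes_{\mathcal{A}_2}\mathcal{E}_2\bigr)$ via Lemma \ref{lemma 4}.

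Next I would apply Lemma \ref{flip operator}, which says $\Psi$ acts as $\Psi_1\otimes J_2$ on the first summand above and as $J_1\otimes \Psi_2$ on the second. Writing $\nabla_j(J_j^{*}\xi_j) = \sum_i \omega_{ji}\otimes e_{ji}$, the first summand becomes
\begin{equation*}
-\Psi\bigl(\textstyle\sum_i \omega_{1i}\otimes e_{1i} \otimes J_2^{*}\xi_2\bigr) = -\textstyle\sum_i \Psi_1(\omega_{1i}\otimes e_{1i})\otimes J_2 J_2^{*}\xi_2 = -\textstyle\sum_i J_1 e_{1i}\otimes \omega_{1i}^{*}\otimes \xi_2,
\end{equation*}
which is exactly $\overline{\nabla}_1(\xi_1)\otimes \xi_2$ by the definition of $\overline{\nabla}_1$. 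Symmetrically, the second summand produces $\xi_1\otimes \overline{\nabla}_2(\xi_2)$, using $J_1 J_1^{*} = \mathrm{id}$ on the $\mathcal{E}_1$-factor. Adding gives the claimed formula $\overline{\nabla} = \overline{\nabla}_1\otimes 1 + 1\otimes \overline{\nabla}_2$.

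The only subtle bookkeeping point — which I would flag as the one item deserving care rather than a genuine obstacle — is matching the domain/codomain decompositions so that $\Psi$ is applied via its correct summand (from Lemma \ref{flip operator}) and the resulting element is re-identified as lying in $\mathcal{E}\otimes_{\mathcal{A}}\Omega_{D}^{1}(\mathcal{A})$ through the canonical isomorphisms of Lemma \ref{lemma 4}. Once these identifications are in place, linearity and extension to finite sums of pure tensors is immediate, and well-definedness on the balanced tensor product over $\mathcal{A}$ follows because each of $\overline{\nabla}_1$ and $\overline{\nabla}_2$ is already known to descend appropriately.
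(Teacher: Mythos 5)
Your proof is correct and follows the same route the paper intends: the paper's own proof is the one-line remark that the lemma ``follows from Lemma \ref{flip operator} and the fact that $J=J_1\otimes J_2$,'' and your argument is precisely that computation carried out explicitly (unwinding $\overline{\nabla}\xi=-\Psi(\nabla J^*\xi)$ on pure tensors, using Proposition \ref{left product connection} for $\nabla$, the summand-wise action $\Psi=\Psi_1\otimes J_2\oplus J_1\otimes\Psi_2$, and the identifications of Lemma \ref{lemma 4}). No gaps; your version simply supplies the details the paper omits.
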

\begin{proof}
Follows from previous Lemma (\ref{flip operator}) and the fact that $J=J_1\otimes J_2$.
\end{proof}

\begin{lemma}\label{tensored connection}
The ``tensored connection'' $\widetilde{\nabla}:\mathcal{E}\otimes_\mathcal{A}\mathcal{E}\longrightarrow\mathcal{E}\otimes_\mathcal{A}\Omega_D^1
(\mathcal{A})\otimes_\mathcal{A}\mathcal{E}$ is given by
\begin{center}
$(\xi_1\otimes\xi_2\otimes\eta_1\otimes\eta_2)\longmapsto\left(\overline{\nabla}_1\otimes\tau\otimes 1+1\otimes(\nabla_1\otimes 1)\tau\otimes 1\,,\,1
\otimes(1\otimes\overline{\nabla}_2)\tau\otimes 1+1\otimes\tau\otimes\nabla_2\right)$
\end{center}
where, $\tau:\xi_2\otimes\eta_1\longmapsto\eta_1\otimes\xi_2$ is the usual flip of tensor product.
\end{lemma}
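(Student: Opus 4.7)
The plan is to combine the definition of $\widetilde{\nabla}$ with the formulas for $\nabla$ and $\overline{\nabla}$ derived in the preceding results, and then translate the outcome into the direct-sum decomposition of the target space induced by Lemma \ref{first form} and Lemma \ref{lemma 4}.

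First I would write $\widetilde{\nabla}(\xi \otimes \eta) = \overline{\nabla}(\xi) \otimes \eta + \xi \otimes \nabla(\eta)$ and substitute $\nabla = \nabla_1 \otimes 1 + 1 \otimes \nabla_2$ (Proposition \ref{left product connection}) together with $\overline{\nabla} = \overline{\nabla}_1 \otimes 1 + 1 \otimes \overline{\nabla}_2$ (Lemma \ref{right product connection}). With $\xi = \xi_1 \otimes \xi_2$ and $\eta = \eta_1 \otimes \eta_2$ this immediately produces four summands: two of the form $\overline{\nabla}_j(\xi_j) \otimes \cdots$ coming from $\overline{\nabla}\xi \otimes \eta$, and two of the form $\cdots \otimes \nabla_j(\eta_j)$ coming from $\xi \otimes \nabla\eta$. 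Each summand sits in a specific position of an expanded tensor product.

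Next I would invoke Lemma \ref{first form} together with the canonical $\mathcal{A}$-bimodule isomorphisms of Lemma \ref{lemma 4} to identify the target $\mathcal{E} \otimes_\mathcal{A} \Omega_D^1(\mathcal{A}) \otimes_\mathcal{A} \mathcal{E}$ with the direct sum
\begin{align*}
&\bigl(\mathcal{E}_1 \otimes_{\mathcal{A}_1} \Omega_{D_1}^1(\mathcal{A}_1) \otimes_{\mathcal{A}_1} \mathcal{E}_1\bigr) \otimes_{\mathbb{C}} \bigl(\mathcal{E}_2 \otimes_{\mathcal{A}_2} \mathcal{E}_2\bigr)\\
&\quad \oplus\,\bigl(\mathcal{E}_1 \otimes_{\mathcal{A}_1} \mathcal{E}_1\bigr) \otimes_{\mathbb{C}} \bigl(\mathcal{E}_2 \otimes_{\mathcal{A}_2} \Omega_{D_2}^1(\mathcal{A}_2) \otimes_{\mathcal{A}_2} \mathcal{E}_2\bigr).
\end{align*}
The two summands involving $\overline{\nabla}_1$ or $\nabla_1$ will naturally land in the first component, while the two involving $\overline{\nabla}_2$ or $\nabla_2$ will land in the second.

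The bookkeeping step is then to reorder the $\mathcal{E}_j$-factors to agree with the above decomposition. In every one of the four cases this reduces, at the level of the four-fold input $\xi_1 \otimes \xi_2 \otimes \eta_1 \otimes \eta_2$, to swapping the middle two slots via the flip $\tau\colon \xi_2 \otimes \eta_1 \mapsto \eta_1 \otimes \xi_2$; the appropriate $\nabla_j$ or $\overline{\nabla}_j$ is then applied to whichever slot of the flipped expression carries the correct vector. Running through the four summands term by term, the first and third produce the first component $\overline{\nabla}_1 \otimes \tau \otimes 1 + 1 \otimes (\nabla_1 \otimes 1)\tau \otimes 1$, while the second and fourth produce the second component $1 \otimes (1 \otimes \overline{\nabla}_2)\tau \otimes 1 + 1 \otimes \tau \otimes \nabla_2$.

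The only subtlety I anticipate is verifying that $\tau$ is well-defined at the level of the $\mathcal{A}$-balanced tensor products and that every rearrangement respects the $\mathcal{A} = \mathcal{A}_1 \otimes \mathcal{A}_2$ bimodule structure; both facts follow from the unitality of $\mathcal{A}_1$ and $\mathcal{A}_2$, which is exactly what powers Lemmas \ref{lemma 3} and \ref{lemma 4}. There is thus no conceptual obstacle, and the lemma reduces to a direct computation once the preceding structural identifications are accepted.
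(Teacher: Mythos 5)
Your proposal is correct and follows essentially the same route as the paper: expand $\widetilde{\nabla}(\xi\otimes\eta)=\overline{\nabla}\xi\otimes\eta+\xi\otimes\nabla\eta$ using Proposition \ref{left product connection} and Lemma \ref{right product connection}, split the four resulting summands into the two components of the decomposition from Lemmas \ref{first form} and \ref{lemma 4}, and observe that the required reordering of factors is exactly the middle flip $\tau$. The paper's proof is precisely this computation written out term by term, so no further comment is needed.
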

\begin{proof}
For $\xi=\xi_1\otimes\xi_2$ and $\eta=\eta_1\otimes\eta_2$ in $\,\mathcal{E}$, using Lemma (\ref{right product connection}) and Proposition
(\ref{left product connection}) we get
\begin{eqnarray*}
&  & \widetilde{\nabla}(\xi\otimes\eta)\\
& = & \overline{\nabla}\xi\otimes\eta+\xi\otimes\nabla\eta\\
& = & \left(\,\overline{\nabla}_1\xi_1\otimes\xi_2\,,\,\xi_1\otimes\overline{\nabla}_2\xi_2\right)\otimes\eta+\xi\otimes\left(\nabla_1\eta_1
\otimes\eta_2\,,\,\eta_1\otimes\nabla_2\eta_2\right)\\
& = & \left(\,\overline{\nabla}_1\xi_1\otimes\xi_2\otimes\eta_1\otimes\eta_2+\xi_1\otimes\xi_2\otimes\nabla_1\eta_1\otimes\eta_2\,,\,\xi_1\otimes
\overline{\nabla}_2\xi_2\otimes\eta_1\otimes\eta_2+\xi_1\otimes\xi_2\otimes\eta_1\otimes\nabla_2\eta_2\right)\\
& = & ((\,\overline{\nabla}_1\xi_1\otimes\eta_1)\otimes(\xi_2\otimes\eta_2)+(\xi_1\otimes\nabla_1\eta_1)\otimes(\xi_2\otimes\eta_2)\,,\,(\xi_1
\otimes\eta_1)\otimes(\,\overline{\nabla}_2\xi_2\otimes\eta_2)\\
&  & \hspace*{8cm}+(\xi_1\otimes\eta_1)\otimes(\xi_2\otimes\nabla_2\eta_2))\\
\end{eqnarray*}
and this concludes the proof.
\end{proof}

\begin{lemma}\label{linear maps c}
The $\mathbb{C}$-linear maps
\begin{center}
$c^\prime\,,\,\overline{c}^{\,\prime}\,:\,\mathcal{E}\otimes_\mathcal{A}\Omega_D^1(\mathcal{A})\otimes_\mathcal{A}\mathcal{E}\longrightarrow\mathcal{E}
\otimes_\mathcal{A}\mathcal{E}$
\end{center}
are given by
\begin{center}
$c^\prime=c_1\otimes 1+\star_1\otimes c_2\quad$ and $\quad\overline{c}^{\,\prime}=\overline{c}_1\otimes\star_2+(\gamma_1\otimes\gamma_1)\otimes
\overline{c}_2\,.$
\end{center}
\end{lemma}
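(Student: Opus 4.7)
The plan is to compute $c$ and $\overline{c}$ separately on each of the two summands of
\[
\mathcal{E}\otimes_\mathcal{A}\Omega_D^1(\mathcal{A})\otimes_\mathcal{A}\mathcal{E}
\]
induced by Lemmas \ref{first form} and \ref{lemma 4}, namely the $\Omega_{D_1}^1(\mathcal{A}_1)\otimes\mathcal{A}_2$-piece and the $\mathcal{A}_1\otimes\Omega_{D_2}^1(\mathcal{A}_2)$-piece. On each summand I would directly unpack the definitions $c(\xi\otimes\omega\otimes\eta)=\xi\otimes\omega\cdot\eta$ and $\overline{c}(\xi\otimes\omega\otimes\eta)=\xi\cdot\omega\otimes\gamma\eta$ using $J=J_1\otimes J_2$ and $\gamma=\gamma_1\otimes\gamma_2$, then match the result with the claimed tensor-product operator via the canonical isomorphisms of Lemmas \ref{lemma 3} and \ref{lemma 4}.

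For the first summand, a 1-form is realised as the operator $\omega_1\otimes a_2$, and the left action on $\eta_1\otimes\eta_2$ splits as $(\omega_1\eta_1)\otimes(a_2\eta_2)$; absorbing $a_2$ across the tensor in $\mathcal{E}_2\otimes_{\mathcal{A}_2}\mathcal{E}_2$ via the bimodule structure coming from $J_2$ yields exactly $c_1\otimes 1$. For $\overline{c}$ on this summand, the right action factorises as $(J_1\omega_1^*J_1^*)\otimes(J_2 a_2^*J_2^*)$, producing $\overline{c}_1$ on the first factor, while the $\gamma_2$-part of $\gamma=\gamma_1\otimes\gamma_2$ combined with $[\gamma_2,a_2]=0$ deposits a $\gamma_1$-free $\gamma_2$ on the right slot of $\mathcal{E}_2\otimes_{\mathcal{A}_2}\mathcal{E}_2$, i.e. $1\otimes\gamma_2=\star_2$; hence $\overline{c}_1\otimes\star_2$.

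The second summand is the source of the $\star_1$ and $(\gamma_1\otimes\gamma_1)$ factors. Because $D=D_1\otimes 1+\gamma_1\otimes D_2$, we have $[D,1\otimes a]=\gamma_1\otimes[D_2,a]$, so a 1-form $a_1\otimes\omega_2\in\mathcal{A}_1\otimes\Omega_{D_2}^1(\mathcal{A}_2)$ is realised in $\Omega_D^1(\mathcal{A})$ as the operator $a_1\gamma_1\otimes\omega_2$. For $c$, the left action of $a_1\gamma_1\otimes\omega_2$ on $\eta_1\otimes\eta_2$ produces $(a_1\gamma_1\eta_1)\otimes(\omega_2\eta_2)$; commuting $\gamma_1$ past $a_1$ via $[\gamma_1,a_1]=0$ and tensoring across $\mathcal{A}_1$, the $\gamma_1$ lands on the right slot of $\mathcal{E}_1\otimes_{\mathcal{A}_1}\mathcal{E}_1$, which is precisely $\star_1=1\otimes\gamma_1$, and the second factor gives $c_2$, yielding $\star_1\otimes c_2$. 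For $\overline{c}$, the right action $J(a_1\gamma_1\otimes\omega_2)^*J^*$ together with the $KO$-relation $J_1\gamma_1 J_1^*=\varepsilon^{\prime\prime}\gamma_1$ leaves a $\gamma_1$ on the left slot of $\mathcal{E}_1$, while the $\gamma_1$-part of $\gamma=\gamma_1\otimes\gamma_2$ contributes another $\gamma_1$ on the right slot; combined, these assemble into the operator $\gamma_1\otimes\gamma_1$ on $\mathcal{E}_1\otimes_{\mathcal{A}_1}\mathcal{E}_1$, paired with $\overline{c}_2$ on the second factor.

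The main obstacle will be the careful bookkeeping of how the $\gamma_j$ and $J_j$ operators transit through the various tensor products and bimodule actions, and correctly identifying the resulting operator with $\star_1$, $\star_2$, or $\gamma_1\otimes\gamma_1$ in the appropriate tensor slot. Once the two summands of $\Omega_D^1(\mathcal{A})$ are written out as concrete operators and the identifications of Lemma \ref{lemma 4} are applied explicitly, the individual matchings become routine.
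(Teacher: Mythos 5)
Your approach coincides with the paper's: both proofs decompose $\Omega_D^1(\mathcal{A})$ via Lemma \ref{first form}, write a generic element $\xi_1\otimes\omega\otimes\xi_2$ in the two summands using the identifications of Lemmas \ref{lemma 3} and \ref{lemma 4}, apply the definitions of $c$ and $\overline{c}$, and read off the resulting operators; the paper carries out the $c^\prime$ computation in full and dismisses $\overline{c}^{\,\prime}$ as ``similar,'' so your summand-by-summand organization is, if anything, more explicit. The parts you spell out are correct and match the paper exactly: the balanced-tensor-product identification $\xi_{12}a_2\otimes\xi_{22}=\xi_{12}\otimes a_2\xi_{22}$ giving $c_1\otimes 1$, the realization of $a_1\otimes\omega_2$ as the operator $a_1\gamma_1\otimes\omega_2$ and the resulting $\star_1=1\otimes\gamma_1$ from commuting $\gamma_1$ past $a_1$, and the factor $\star_2$ coming from the $\gamma_2$-part of $\gamma=\gamma_1\otimes\gamma_2$.

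One step deserves more care, namely $\overline{c}^{\,\prime}$ on the $\mathcal{A}_1\otimes\Omega_{D_2}^1(\mathcal{A}_2)$ summand. The right action there is $J_1(a_1\gamma_1)^*J_1^*\otimes J_2\omega_2^*J_2^*$, and the relation $J_1\gamma_1J_1^*=\varepsilon^{\prime\prime}\gamma_1$ that you invoke produces the factor $\varepsilon^{\prime\prime}\gamma_1$, not $\gamma_1$; your phrase ``leaves a $\gamma_1$ on the left slot'' silently drops this sign, which equals $-1$ in $KO$-dimensions $2$ and $6$. As written, your argument establishes $\varepsilon_1^{\prime\prime}(\gamma_1\otimes\gamma_1)\otimes\overline{c}_2$ for that piece, so you should either justify why $\varepsilon^{\prime\prime}=+1$ in the setting at hand or carry the sign through. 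The paper's own (omitted) verification of $\overline{c}^{\,\prime}$ faces the identical issue, so this is a loose end shared with the source rather than a defect peculiar to your route; everything else is routine bookkeeping, as you predict.
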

\begin{proof}
Consider $\xi_1=\xi_{11}\otimes\xi_{12}$ and $\xi_2=\xi_{21}\otimes\xi_{22}$ in $\mathcal{E}$. Let $\omega=(\omega_1\otimes a_2\,,\,a_1\otimes
\omega_2)\in\Omega_D^1(\mathcal{A})$. Now,
\begin{eqnarray*}
\xi_1\otimes\omega\otimes\xi_2 & = & (\xi_{11}\otimes\xi_{12})\otimes(\omega_1\otimes a_2\,,\,a_1\otimes\omega_2)\otimes(\xi_{21}\otimes\xi_{22})\\
& = & (\xi_{11}\otimes\omega_1\otimes\xi_{12}a_2\,,\,\xi_{11}a_1\otimes\xi_{12}\otimes\omega_2)\otimes(\xi_{21}\otimes\xi_{22})\\
& = & (\xi_{11}\otimes\omega_1\otimes\xi_{21}\otimes\xi_{12}a_2\otimes\xi_{22}\,,\,\xi_{11}a_1\otimes\xi_{21}\otimes\xi_{12}\otimes\omega_2\otimes\xi_{22})
\end{eqnarray*}
and
\begin{eqnarray*}
\xi_1\otimes\omega\,.\,\xi_2 & = & (\xi_{11}\otimes\xi_{12})\otimes(\omega_1\otimes a_2\,,\,a_1\otimes\omega_2)\,.\,(\xi_{21}\otimes\xi_{22})\\
& = & (\xi_{11}\otimes\xi_{12})\otimes(\omega_1.\,\xi_{21}\otimes a_2\xi_{22}+\gamma_1a_1\xi_{21}\otimes\omega_2\,.\,\xi_{22})\\
& = & \xi_{11}\otimes\omega_1.\,\xi_{21}\otimes\xi_{12}\otimes a_2\xi_{22}+\xi_{11}\otimes\gamma_1a_1\xi_{21}\otimes\xi_{12}\otimes\omega_2\,.\,\xi_{22}
\end{eqnarray*}
Since, $c^\prime:\xi_1\otimes\omega\otimes\xi_2\longmapsto\xi_1\otimes\omega\,.\,\xi_2$ and $\star_1=1\otimes\gamma_1$ we get
\begin{center}
$c^\prime=c_1\otimes 1+\star_1\otimes c_2\,.$
\end{center}
Similarly, one can verify that $\overline{c}^{\,\prime}:\xi_1\otimes\omega\otimes\xi_2\longmapsto\xi_1.\,\omega\otimes\gamma\xi_2$ is given by
\begin{center}
$\overline{c}^\prime=\overline{c}_1\otimes\star_2+(\gamma_1\otimes\gamma_1)\otimes\overline{c}_2$
\end{center}
where $\star_2=1\otimes\gamma_2$ and $\gamma_1\otimes\gamma_1$ is the grading operator on $\overline{\mathcal{E}_1\otimes_{\mathcal{A}_1}\mathcal{E}_1}\,$.
\end{proof}

\begin{lemma}\label{final Dirac operators for product}
We have
\begin{itemize}
\item[(i)] $\mathfrak{D}:=c^\prime\circ\widetilde{\nabla}=\mathfrak{D}_1\otimes 1+\star_1\otimes\mathfrak{D}_2$
\item[(ii)] $\overline{\mathfrak{D}}:=\overline{c}^\prime\circ\widetilde{\nabla}=\overline{\mathfrak{D}}_1\otimes\star_2+\widetilde{\gamma_1}\otimes
\overline{\mathfrak{D}}_2$
\end{itemize}
where, $\widetilde{\gamma_1}=\gamma_1\otimes\gamma_1$ is the grading operator acting on $\overline{\mathcal{E}_1\otimes_{\mathcal{A}_1}\mathcal{E}_1}\,$.
\end{lemma}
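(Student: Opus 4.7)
The plan is to directly compose $c^\prime$ (respectively $\overline{c}^\prime$) with $\widetilde{\nabla}$ using the explicit formulas already established in Lemmas (\ref{tensored connection}) and (\ref{linear maps c}), and to recognize the resulting terms as the individual $\mathfrak{D}_j = c_j \circ \widetilde{\nabla}_j$ and $\overline{\mathfrak{D}}_j = \overline{c}_j \circ \widetilde{\nabla}_j$ tensored with appropriate companion operators on the other factor. No computation beyond unpacking the maps is needed.

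First, I would rewrite the output of $\widetilde{\nabla}$ transparently. Combining the identification $\mathcal{E} \otimes_\mathcal{A} \mathcal{E} \cong (\mathcal{E}_1 \otimes_{\mathcal{A}_1} \mathcal{E}_1) \otimes_\mathbb{C} (\mathcal{E}_2 \otimes_{\mathcal{A}_2} \mathcal{E}_2)$ from Lemma (\ref{lemma 3}) with the direct-sum splitting of $\Omega_D^1(\mathcal{A})$ from Lemma (\ref{first form}) and the four isomorphisms of Lemma (\ref{lemma 4}), the codomain $\mathcal{E} \otimes_\mathcal{A} \Omega_D^1(\mathcal{A}) \otimes_\mathcal{A} \mathcal{E}$ decomposes canonically as
\[
\bigl[(\mathcal{E}_1 \otimes_{\mathcal{A}_1} \Omega_{D_1}^1(\mathcal{A}_1) \otimes_{\mathcal{A}_1} \mathcal{E}_1) \otimes (\mathcal{E}_2 \otimes_{\mathcal{A}_2} \mathcal{E}_2)\bigr] \;\oplus\; \bigl[(\mathcal{E}_1 \otimes_{\mathcal{A}_1} \mathcal{E}_1) \otimes (\mathcal{E}_2 \otimes_{\mathcal{A}_2} \Omega_{D_2}^1(\mathcal{A}_2) \otimes_{\mathcal{A}_2} \mathcal{E}_2)\bigr].
\]
Under this identification the content of Lemma (\ref{tensored connection}) --- once the flip $\tau$ is absorbed by the reordering of tensor factors --- reads simply
\[
\widetilde{\nabla}(\xi \otimes \eta) = \widetilde{\nabla}_1(\xi_1 \otimes \eta_1) \otimes (\xi_2 \otimes \eta_2) \;\oplus\; (\xi_1 \otimes \eta_1) \otimes \widetilde{\nabla}_2(\xi_2 \otimes \eta_2).
\]

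Now I would apply $c^\prime = c_1 \otimes 1 + \star_1 \otimes c_2$ summand by summand. On the first summand only $c_1 \otimes 1$ contributes, because $\Omega_{D_1}^1(\mathcal{A}_1)$ lives in the first slot where it can be absorbed by $c_1$; this produces $(c_1 \circ \widetilde{\nabla}_1)(\xi_1 \otimes \eta_1) \otimes (\xi_2 \otimes \eta_2) = (\mathfrak{D}_1 \otimes 1)[(\xi_1 \otimes \eta_1) \otimes (\xi_2 \otimes \eta_2)]$. On the second summand only $\star_1 \otimes c_2$ contributes, producing $\star_1(\xi_1 \otimes \eta_1) \otimes (c_2 \circ \widetilde{\nabla}_2)(\xi_2 \otimes \eta_2) = (\star_1 \otimes \mathfrak{D}_2)[(\xi_1 \otimes \eta_1) \otimes (\xi_2 \otimes \eta_2)]$. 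Summing gives part (i). Repeating the same argument with $\overline{c}^\prime = \overline{c}_1 \otimes \star_2 + (\gamma_1 \otimes \gamma_1) \otimes \overline{c}_2$ in place of $c^\prime$ gives $\overline{\mathfrak{D}}_1 \otimes \star_2$ from the first summand and $\widetilde{\gamma_1} \otimes \overline{\mathfrak{D}}_2$ from the second, with $\widetilde{\gamma_1} := \gamma_1 \otimes \gamma_1$ as stated, proving part (ii). The only step requiring care --- and the main (mild) obstacle --- is keeping track of which copy of $\mathcal{E}_j \otimes_{\mathcal{A}_j} \mathcal{E}_j$ is acted on by the companion operator ($\star_j$ or $\widetilde{\gamma_1}$) versus by the identity; this bookkeeping is dictated by the placement of $\Omega_{D_j}^1(\mathcal{A}_j)$ within the decomposition of Lemma (\ref{first form}), together with the appearance of $\gamma_1$ inside the right $\Omega_D^1(\mathcal{A})$-action on $\mathcal{H}$ that produced $c^\prime$ and $\overline{c}^\prime$ in the computation of Lemma (\ref{linear maps c}).
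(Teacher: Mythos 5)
Your proposal is correct and follows essentially the same route as the paper: both unpack $\widetilde{\nabla}(\xi\otimes\eta)$ into the two summands $\widetilde{\nabla}_1(\xi_1\otimes\eta_1)\otimes(\xi_2\otimes\eta_2)$ and $(\xi_1\otimes\eta_1)\otimes\widetilde{\nabla}_2(\xi_2\otimes\eta_2)$ via Lemma (\ref{tensored connection}), then apply $c^\prime$ and $\overline{c}^{\,\prime}$ componentwise using Lemma (\ref{linear maps c}) and identify $c_j\circ\widetilde{\nabla}_j=\mathfrak{D}_j$, $\overline{c}_j\circ\widetilde{\nabla}_j=\overline{\mathfrak{D}}_j$. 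The only difference is presentational: you state the identification abstractly through the direct-sum decomposition, while the paper carries out the same bookkeeping at the level of explicit elements.
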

\begin{proof}
Consider $\xi=\xi_1\otimes\xi_2$ and $\eta=\eta_1\otimes\eta_2$ in $\mathcal{E}$. Now, using Lemma (\ref{tensored connection}\,,\,\ref{linear maps c})
we get
\begin{eqnarray*}
\mathfrak{D} & = & c^\prime\circ\widetilde{\nabla}(\xi\otimes\eta)\\
& = & \left(c_1(\,\overline{\nabla}_1\xi_1\otimes\eta_1)+c_1(\xi_1\otimes\nabla_1\eta_1)\right)\otimes(\xi_2\otimes\eta_2)\\
&  & +\star_1(\xi_1\otimes\eta_1)\otimes\left(c_2\,(\overline{\nabla}_2\xi_2\otimes\eta_2)+c_2(\xi_2\otimes\nabla_2\eta_2)\right)\\
& = & c_1\widetilde{\nabla_1}(\xi_1\otimes\eta_1)\otimes(\xi_2\otimes\eta_2)+\star_1(\xi_1\otimes\eta_1)\otimes c_2\widetilde{\nabla_2}(\xi_2\otimes\eta_2)\\
& = & (\mathfrak{D}_1\otimes 1+\star_1\otimes\mathfrak{D}_2)(\xi\otimes\eta)
\end{eqnarray*}
Similarly, one can show that $\overline{\mathfrak{D}}=\overline{\mathfrak{D}}_1\otimes\star_2+(\gamma_1\otimes\gamma_1)\otimes\overline{\mathfrak{D}}_2\,$.
\end{proof}

\begin{lemma}\label{relations among Dirac operators}
Both $\mathfrak{D}$ and $\overline{\mathfrak{D}}$ are essentially self-adjoint operator satisfying $\mathfrak{D}^2=\overline{\mathfrak{D}}^2$ and
$\{\mathfrak{D},\overline{\mathfrak{D}}\}=0$.
\end{lemma}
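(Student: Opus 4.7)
The plan is to verify the three assertions separately, drawing on the individual $N=(1,1)$ relations for each factor (Lemma~\ref{crucial relation involving Hodge}) together with the standing assumption that $\star_j$ is a self-adjoint unitary commuting with $\widetilde{\gamma}_j$. All algebraic identities will be obtained by organizing the expansions by tensor factor so that each grouping collapses under one known relation.

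For the equality of squares, I would first expand
\[
\mathfrak{D}^2=(\mathfrak{D}_1\otimes 1+\star_1\otimes\mathfrak{D}_2)^2=\mathfrak{D}_1^2\otimes 1+\{\mathfrak{D}_1,\star_1\}\otimes\mathfrak{D}_2+\star_1^2\otimes\mathfrak{D}_2^2,
\]
so that $\{\star_1,\mathfrak{D}_1\}=0$ and $\star_1^2=1$ reduce this to $\mathfrak{D}_1^2\otimes 1+1\otimes\mathfrak{D}_2^2$. A parallel expansion of $\overline{\mathfrak{D}}^2$ produces two diagonal contributions $\overline{\mathfrak{D}}_1^2\otimes\star_2^2$ and $\widetilde{\gamma}_1^2\otimes\overline{\mathfrak{D}}_2^2$ together with a cross term $\overline{\mathfrak{D}}_1\widetilde{\gamma}_1\otimes\star_2\overline{\mathfrak{D}}_2+\widetilde{\gamma}_1\overline{\mathfrak{D}}_1\otimes\overline{\mathfrak{D}}_2\star_2$; the cross term vanishes by $\{\widetilde{\gamma}_1,\overline{\mathfrak{D}}_1\}=0$ paired with $[\star_2,\overline{\mathfrak{D}}_2]=0$, leaving $\overline{\mathfrak{D}}_1^2\otimes 1+1\otimes\overline{\mathfrak{D}}_2^2$, which equals $\mathfrak{D}^2$ by $\mathfrak{D}_j^2=\overline{\mathfrak{D}}_j^2$.

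For the anticommutator, I would write out the eight terms of $\mathfrak{D}\,\overline{\mathfrak{D}}+\overline{\mathfrak{D}}\,\mathfrak{D}$ and group them by their right tensor factor. The $\otimes\star_2$ terms collapse to $\{\mathfrak{D}_1,\overline{\mathfrak{D}}_1\}\otimes\star_2=0$; the $\otimes\overline{\mathfrak{D}}_2$ terms collapse to $\{\mathfrak{D}_1,\widetilde{\gamma}_1\}\otimes\overline{\mathfrak{D}}_2=0$; using $[\star_1,\widetilde{\gamma}_1]=0$ the remaining mixed terms split into $\star_1\widetilde{\gamma}_1\otimes\{\mathfrak{D}_2,\overline{\mathfrak{D}}_2\}=0$ and, after one application each of $[\star_1,\overline{\mathfrak{D}}_1]=0$ and $\{\star_2,\mathfrak{D}_2\}=0$, into $\overline{\mathfrak{D}}_1\star_1\otimes\{\mathfrak{D}_2,\star_2\}=0$. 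Every grouping vanishes by a single $N=(1,1)$ relation.

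The step that will need the most care is essential self-adjointness, since both operators are unbounded. My plan is to reduce to the standard Kasparov form by bounded unitary conjugation. Since $\star_1$ is a self-adjoint unitary commuting with $\widetilde{\gamma}_1$, the unitary $U:=\tfrac{1}{2}(1\otimes 1+\star_1\otimes 1+1\otimes\widetilde{\gamma}_2-\star_1\otimes\widetilde{\gamma}_2)$ (and its analogue with $\widetilde{\gamma}_1$) used in the remark following Definition~\ref{product of N=1 data} intertwines $\mathfrak{D}$ with $\mathfrak{D}_1\otimes 1+\widetilde{\gamma}_1\otimes\mathfrak{D}_2$, for which essential self-adjointness on the algebraic tensor product of cores of $\mathfrak{D}_1$ and $\mathfrak{D}_2$ is the standard Kasparov-product fact (\cite{Con1}). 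The same device, now combining the $\star_2$ and $\widetilde{\gamma}_1$ factors, converts $\overline{\mathfrak{D}}$ into $\overline{\mathfrak{D}}_1\otimes 1+\widetilde{\gamma}_1\otimes\overline{\mathfrak{D}}_2$, handled identically. The main bookkeeping obstacle is simply to check that the bounded operators $\star_j,\widetilde{\gamma}_j$ preserve the cores of $\mathfrak{D}_j,\overline{\mathfrak{D}}_j$ so that the intertwining is genuinely on the initial domain; this is automatic because each $\star_j,\widetilde{\gamma}_j$ commutes or anticommutes with the relevant Dirac operator on its natural dense domain.
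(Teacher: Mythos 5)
Your proposal is correct and takes essentially the same route as the paper: the identities $\mathfrak{D}^2=\overline{\mathfrak{D}}^2$ and $\{\mathfrak{D},\overline{\mathfrak{D}}\}=0$ are derived, exactly as the paper indicates, from the factorwise relations of Lemma~\ref{crucial relation involving Hodge} together with $[\star_1,\widetilde{\gamma_1}]=0$, which you simply write out term by term. For essential self-adjointness the paper applies the Dabrowski--Dossena argument directly to the expressions of Lemma~\ref{final Dirac operators for product}, while you first conjugate by Vanhecke-type unitaries to reach the literal Kasparov form $\mathfrak{D}_1\otimes 1+\widetilde{\gamma_1}\otimes\mathfrak{D}_2$ --- a harmless repackaging of the same standard fact, and in fact a useful explicit step for $\overline{\mathfrak{D}}=\overline{\mathfrak{D}}_1\otimes\star_2+\widetilde{\gamma_1}\otimes\overline{\mathfrak{D}}_2$, whose shape (nontrivial operators in both tensor legs of both summands) is not literally of the Dabrowski--Dossena type.
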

\begin{proof}
Essential self-adjointness follows from the expression of $\mathfrak{D}$ and $\overline{\mathfrak{D}}$ in Lemma (\ref{final Dirac operators for product})
along the line of (\cite{DD}, Page $1839$). The relations $\mathfrak{D}^2=\overline{\mathfrak{D}}^2$ and $\{\mathfrak{D},\overline{\mathfrak{D}}\}=0$
follow from Lemma (\ref{crucial relation involving Hodge}) and the fact that $[\star_1,\widetilde{\gamma_1}]=0$.
\end{proof}

\begin{lemma}\label{trace class}
We have
\begin{itemize}
\item[(i)] $[\mathfrak{D},a]$ and $[\,\overline{\mathfrak{D}},a]$ extends to bounded operators on $\overline{\mathcal{E}\otimes_\mathcal{A}
\mathcal{E}}$ for all $a\in\mathcal{A};$
\item[(ii)] $exp(-\varepsilon\mathfrak{D}^2)$ is trace class for all $\,\varepsilon>0$.
\end{itemize}
\end{lemma}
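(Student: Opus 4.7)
The plan is to exploit the explicit tensor-decompositions of $\mathfrak{D}$ and $\overline{\mathfrak{D}}$ obtained in Lemma \ref{final Dirac operators for product}, reducing everything to statements already known about the individual $N=(1,1)$ spectral data $\Phi(\mathcal{A}_j,\mathcal{H}_j,D_j,\gamma_j)$.

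For part (i), I would begin with a simple tensor $a=a_1\otimes a_2\in\mathcal{A}=\mathcal{A}_1\otimes\mathcal{A}_2$ and compute
\begin{center}
$[\mathfrak{D}_1\otimes 1+\star_1\otimes\mathfrak{D}_2\,,\,a_1\otimes a_2]=[\mathfrak{D}_1,a_1]\otimes a_2+\star_1 a_1\otimes[\mathfrak{D}_2,a_2],$
\end{center}
where I use that $[\star_1,a_1]=0$ (since $\star_1$ commutes with the $\mathcal{A}_1$-action) and that $\mathcal{A}_1$-elements commute with $\mathfrak{D}_2$ because they act on disjoint tensor factors. Each $[\mathfrak{D}_j,a_j]$ extends to a bounded operator by the assumption that $\Phi(\mathcal{A}_j,\mathcal{H}_j,D_j,\gamma_j)$ is an honest $N=(1,1)$ spectral datum, while $\star_1, a_1, a_2$ act boundedly; so the sum is bounded on the algebraic tensor product and therefore extends boundedly to $\overline{\mathcal{E}\otimes_\mathcal{A}\mathcal{E}}$. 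Density of simple tensors and linearity close the case for $[\mathfrak{D},a]$. The identical bookkeeping with $\overline{\mathfrak{D}}=\overline{\mathfrak{D}}_1\otimes\star_2+\widetilde{\gamma_1}\otimes\overline{\mathfrak{D}}_2$ yields $[\overline{\mathfrak{D}},a_1\otimes a_2]=[\overline{\mathfrak{D}}_1,a_1]\otimes\star_2 a_2+\widetilde{\gamma_1}a_1\otimes[\overline{\mathfrak{D}}_2,a_2]$, using that $\widetilde{\gamma_1}$ and $\star_2$ commute with the $\mathcal{A}$-action.

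For part (ii), the core calculation is to square $\mathfrak{D}=\mathfrak{D}_1\otimes 1+\star_1\otimes\mathfrak{D}_2$ and exploit the relation $\{\star_1,\mathfrak{D}_1\}=0$ supplied by Lemma \ref{crucial relation involving Hodge}. Expanding,
\begin{center}
$\mathfrak{D}^2=\mathfrak{D}_1^2\otimes 1+\{\mathfrak{D}_1,\star_1\}\otimes\mathfrak{D}_2+\star_1^2\otimes\mathfrak{D}_2^2=\mathfrak{D}_1^2\otimes 1+1\otimes\mathfrak{D}_2^2,$
\end{center}
since $\star_1$ is unitary and self-adjoint, so $\star_1^2=1$. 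The two summands $\mathfrak{D}_1^2\otimes 1$ and $1\otimes\mathfrak{D}_2^2$ are nonnegative essentially self-adjoint operators acting on different tensor legs and therefore commute strongly. Functional calculus then gives
\begin{center}
$\exp(-\varepsilon\mathfrak{D}^2)=\exp(-\varepsilon\mathfrak{D}_1^2)\otimes\exp(-\varepsilon\mathfrak{D}_2^2).$
\end{center}
Taking traces,
\begin{center}
$\mathrm{Tr}\bigl(\exp(-\varepsilon\mathfrak{D}^2)\bigr)=\mathrm{Tr}\bigl(\exp(-\varepsilon\mathfrak{D}_1^2)\bigr)\cdot\mathrm{Tr}\bigl(\exp(-\varepsilon\mathfrak{D}_2^2)\bigr)<\infty$
\end{center}
for every $\varepsilon>0$, since each factor is finite by the $\varTheta$-summability of the individual $N=(1,1)$ data.

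I expect no serious obstacle; the only delicate points are (a) verifying that the ambient Hilbert space on which the tensor-product operators act really decomposes as the external tensor product of the two $\overline{\mathcal{E}_j\otimes_{\mathcal{A}_j}\mathcal{E}_j}$ so that the standard ``tensor of trace class is trace class with product trace'' applies, and (b) ensuring that the cross term $\{\mathfrak{D}_1,\star_1\}\otimes\mathfrak{D}_2$ truly vanishes on the algebraic domain where $\widetilde{\nabla}$ is defined, before one invokes closures. Both follow from the structural lemmas (\ref{lemma 3},\ref{crucial relation involving Hodge}) already in hand, so the argument reduces to the bookkeeping sketched above.
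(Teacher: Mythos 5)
Your argument is correct and is essentially the paper's own proof: the paper likewise uses $\{\star_1,\mathfrak{D}_1\}=0$ to obtain $\mathfrak{D}^2=\mathfrak{D}_1^2\otimes 1+1\otimes\mathfrak{D}_2^2$ and concludes trace-classness from the factorization of the heat trace, while dismissing the bounded-commutator check as routine. You have merely filled in the routine commutator computation and the identification of $\overline{\mathcal{E}\otimes_\mathcal{A}\mathcal{E}}$ with the tensor product of the $\overline{\mathcal{E}_j\otimes_{\mathcal{A}_j}\mathcal{E}_j}$, both of which the paper leaves implicit.
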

\begin{proof}
Since $\{\star_1,\mathfrak{D}_1\}=0,\,\mathfrak{D}^2=\mathfrak{D}_1^2\otimes 1+1\otimes\mathfrak{D}_2^2\,$. This shows that $exp(-\varepsilon
\mathfrak{D}^2)$ is trace class for all $\varepsilon>0$ since, for $j=1,2,\,exp(-\varepsilon\mathfrak{D}_j^2)$ is trace class for all $\varepsilon>0$
by our assumption. Checking the bounded commutators are easy.
\end{proof}

Combining Lemma (\ref{final Dirac operators for product}\,,\,\ref{relations among Dirac operators}\,,\,\ref{trace class}) we conclude the following
theorem.

\begin{theorem}
Given two $N=1$ spectral data $(\mathcal{A}_j,\mathcal{H}_j,D_j,\gamma_j),\,j=1,2$, if $\,\Phi(\mathcal{A}_j,\mathcal{H}_j,D_j,\gamma_j)$ gives us
two $N=(1,1)$ spectral data then $\Phi\left(\otimes_{j=1}^2(\mathcal{A}_j,\mathcal{H}_j,D_j,\gamma_j)\right)$ is also a $N=(1,1)$ spectral data.
Moreover, $\Phi$ is multiplicative, i,e.
\begin{center}
$\Phi\left(\otimes_{j=1}^2(\mathcal{A}_j,\mathcal{H}_j,D_j,\gamma_j)\right)=\otimes_{j=1}^2\Phi\left((\mathcal{A}_j,\mathcal{H}_j,D_j,\gamma_j)
\right)\,,$
\end{center}
w.r.t the tensor product in Def. $(\ref{product of N=(1,1) data})$ but not multiplicative w.r.t any other tensor product in Proposition
$(\ref{various product of N=(1,1)})$. Therefore, if we demand that the extension procedure $\Phi$ is multiplicative then there is a unique
choice of tensor product of $N=(1,1)$ spectral data.
\end{theorem}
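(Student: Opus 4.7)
The plan is to assemble the three structural lemmas already proved in this section and then read off multiplicativity, and its failure for the five alternatives, directly from the explicit formulas. First I would invoke Lemma \ref{final Dirac operators for product} to record that the extension procedure $\Phi$, applied to the Kasparov product $\otimes_{j=1}^{2}(\mathcal{A}_j,\mathcal{H}_j,D_j,\gamma_j)$, outputs the two operators
\begin{center}
$\mathfrak{D}=\mathfrak{D}_1\otimes 1+\star_1\otimes\mathfrak{D}_2\,,\qquad\overline{\mathfrak{D}}=\overline{\mathfrak{D}}_1\otimes\star_2+\widetilde{\gamma_1}\otimes\overline{\mathfrak{D}}_2$
\end{center}
on $\widetilde{\mathcal{H}}=\overline{\mathcal{E}_1\otimes_{\mathcal{A}_1}\mathcal{E}_1}\otimes\overline{\mathcal{E}_2\otimes_{\mathcal{A}_2}\mathcal{E}_2}$, where $\widetilde{\gamma_1}=\gamma_1\otimes\gamma_1$ is the grading operator on the first factor. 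Combined with Lemmas \ref{relations among Dirac operators} and \ref{trace class} this yields essential self-adjointness of $\mathfrak{D},\overline{\mathfrak{D}}$, the identities $\mathfrak{D}^2=\overline{\mathfrak{D}}^{\,2}$ and $\{\mathfrak{D},\overline{\mathfrak{D}}\}=0$, boundedness of the commutators with elements of $\mathcal{A}=\mathcal{A}_1\otimes\mathcal{A}_2$, and the trace-class property of $\exp(-\varepsilon\mathfrak{D}^2)$; together with the tensor-product grading $\widetilde{\gamma}=\widetilde{\gamma_1}\otimes\widetilde{\gamma_2}$ and Hodge operator $\star=\star_1\otimes\star_2$ this confirms that $\Phi(\otimes_{j}(\mathcal{A}_j,\mathcal{H}_j,D_j,\gamma_j))$ is a genuine $N=(1,1)$ spectral datum, settling Question $1$.

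For the multiplicativity clause I would compare these formulas with those produced by Definition \ref{product of N=(1,1) data} applied to the two individual extended data $\Phi(\mathcal{A}_j,\mathcal{H}_j,D_j,\gamma_j)=(\mathcal{A}_j,\widetilde{\mathcal{H}}_j,\mathfrak{D}_j,\overline{\mathfrak{D}}_j,\widetilde{\gamma_j},\star_j)$: that definition outputs exactly $\mathfrak{D}_1\otimes 1+\star_1\otimes\mathfrak{D}_2$ and $\overline{\mathfrak{D}}_1\otimes\star_2+\widetilde{\gamma_1}\otimes\overline{\mathfrak{D}}_2$, with grading $\widetilde{\gamma_1}\otimes\widetilde{\gamma_2}$ and Hodge operator $\star_1\otimes\star_2$, so the two sides agree term by term. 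To rule out the remaining five candidates of Proposition \ref{various product of N=(1,1)}, I would simply substitute each of them into the putative equality and note that in every case a factor $\star_j$ has been replaced by $\gamma_j$ or by $1$, or vice versa; equating such a formula to the one enforced by Lemma \ref{final Dirac operators for product} would force a nontrivial algebraic identity such as $\star_j=\gamma_j$, $\star_j=1$ or $\gamma_j=1$, all of which are excluded by the Remark following Definition \ref{N=(1,1) spectral data} and by the nontriviality of the grading on $\widetilde{\mathcal{H}}_j$.

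The one conceptual subtlety, and hence the main place to be careful, is the identification of the grading operator $\widetilde{\gamma_1}=\gamma_1\otimes\gamma_1$ that arises naturally on the first-factor Hilbert space $\overline{\mathcal{E}_1\otimes_{\mathcal{A}_1}\mathcal{E}_1}$ with the symbol ``$\gamma_1$'' appearing in Definition \ref{product of N=(1,1) data}; without this conventional identification the $\overline{\mathfrak{D}}$-formulas from the two sides would superficially disagree. Once this bookkeeping is fixed, the rest of the argument is a direct matching of the six tensor-product recipes against the single formula enforced by the extension procedure, from which the uniqueness statement is immediate.
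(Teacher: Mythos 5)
Your proposal is correct and follows essentially the same route as the paper, whose proof of this theorem consists precisely of combining Lemmas \ref{final Dirac operators for product}, \ref{relations among Dirac operators} and \ref{trace class} and matching the resulting operators against Definition \ref{product of N=(1,1) data} under the identification $\widetilde{\gamma_1}=\gamma_1\otimes\gamma_1$. You in fact spell out slightly more than the paper does, by sketching why the five alternative products of Proposition \ref{various product of N=(1,1)} are excluded (a factor $\star_j$ versus $\gamma_j$ or $1$ would force an identity like $\star_j=\gamma_j$, ruled out by the Remark after Definition \ref{N=(1,1) spectral data}), a point the paper leaves implicit.
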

\medskip

\subsection*{Acknowledgement}
Author gratefully acknowledges financial support of DST, India through\\
INSPIRE Faculty award (Award No. DST/INSPIRE/04/2015/000901).
\medskip

\end{document}